\newtheorem{theo}{Theorem}[section]
\newtheorem{prop}[theo]{Proposition}
\newtheorem{lemm}[theo]{Lemma}
\newtheorem{coro}[theo]{Corollary}
\newtheorem{rema}[theo]{Remark}
\newtheorem{Defi}[theo]{Definition}
\newtheorem{question}[theo]{Question}
\newtheorem{conj}[theo]{Conjecture}
\title{The generalized Hodge  and Bloch conjectures  are equivalent for  general complete intersections, II}
\author{Claire Voisin
\\CNRS, \'{E}cole Polytechnique}
\date{}
\newfont{\gothic}{eufb10}
\begin{document}
\maketitle

\begin{abstract} We prove an unconditional (but slightly weakened) version of the main result of \cite{voisingenhodgebloch},
which was, starting from dimension $4$, conditional to the Lefschetz
standard conjecture. Let $X$ be a variety with trivial Chow groups,
(i.e. the cycle class map to cohomology is injective on
$CH(X)_\mathbb{Q}$). We prove that if the cohomology of a general
hypersurface $Y$ in $X$ is ``parameterized by cycles of dimension
$c$'', then the Chow groups $CH_{i}(Y)_\mathbb{Q}$ are trivial for
$i\leq c-1$.

 \end{abstract}

Let $X$ be a smooth projective variety. We will say that $X$ has
geometric coniveau $\geq c$ if the transcendental cohomology of $X$,
that is, the orthogonal with respect to Poincar\'e duality of the
``algebraic cohomology'' of $X$  generated by classes of algebraic
cycles,
$$H^*(X,\mathbb{Q})_{tr}:=H^*(X,\mathbb{Q})_{alg}^{\perp},$$
is supported on a closed algebraic subset $W\subset X$,
with ${\rm codim}\,W\geq c$.

According to the generalized Hodge conjecture
\cite{grothhodge}, $X$ has geometric coniveau $\geq c$ if and only if $X$ has Hodge coniveau
$\geq c$, where we define the Hodge coniveau of $X$ as the minimum over $k$ of the
Hodge coniveaux of the Hodge structures $H^k(X,\mathbb{Q})_{tr}$. Here we recall
that the Hodge coniveau of a weight $k$ Hodge structure $(L,L^{p,q})$ is the
integer $c\leq k/2$ such that
$$L_\mathbb{C}=L^{k-c,c}\oplus L^{k-c-1,c+1}\oplus\ldots \oplus L^{c,k-c}$$
with $L^{k-c,c}\not=0$. As the Hodge coniveau is computable by looking at  the Hodge numbers,
we know conjecturally how to compute the geometric coniveau.

A fundamental conjecture on algebraic cycles is the generalized
Bloch conjecture (see \cite[Conjecture 1.10]{voisinweyl}), which was
formulated by Bloch \cite{bloch} in the case of surfaces, and can be
stated as follows:
\begin{conj} \label{conjbloch} Assume $X$ has geometric coniveau $\geq  c$. Then the cycle class map
$$CH_i(X)_\mathbb{Q}\rightarrow H^{2n-2i}(X,\mathbb{Q}),\,n={\rm dim}\,X,$$
is injective for any $i\leq c-1$.
\end{conj}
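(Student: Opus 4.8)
The assertion is the \emph{generalized Bloch conjecture} itself, which is open in the stated generality (arbitrary smooth projective $X$ and the full range $i\leq c-1$); accordingly I outline the program one would follow rather than a complete proof. The natural line of attack is the method of \emph{decomposition of the diagonal}, due to Bloch and Srinivas for $c=1$ and extended to higher coniveau by Paranjape, Laterveer and Vial. The starting point is that the hypothesis ``geometric coniveau $\geq c$'' gives, by definition, a closed algebraic $W\subset X$ with $\mathrm{codim}\,W\geq c$ supporting $H^*(X,\mathbb{Q})_{tr}$. Cohomologically this means the transcendental projector $\pi_{tr}$ acting on $H^*(X,\mathbb{Q})$ factors through the Gysin image of a smooth resolution $\widetilde W\to W$ with $\dim\widetilde W\leq n-c$, while the complementary projector $\pi_{alg}=[\Delta_X]-\pi_{tr}$ lands in $H^*_{alg}\otimes H^*_{alg}$ and is therefore cohomologous to a sum of products $[A]\times[B]$ of algebraic cycle classes. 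Thus one obtains, \emph{at the level of cohomology}, a decomposition of the class of the diagonal into an algebraic and a transcendental piece.

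Granting that these cohomological facts lift to an \emph{actual} equality in $CH_n(X\times X)_\mathbb{Q}$,
$$\Delta_X=\Gamma_{alg}+\Gamma_{tr},$$
in which $\Gamma_{alg}$ is a sum of products $A\times B$ of honest subvarieties and $\Gamma_{tr}=\Gamma_2\circ\Gamma_1$ factors through $\widetilde W$ with $\Gamma_1\in CH(X\times\widetilde W)_\mathbb{Q}$ and $\Gamma_2\in CH(\widetilde W\times X)_\mathbb{Q}$, the proof concludes by a correspondence argument. Let $z\in CH_i(X)_\mathbb{Q}$ be homologically trivial with $i\leq c-1$; the left-hand side returns $z$. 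On the right, each product term of $\Gamma_{alg}$ contributes a multiple of $\bigl(\int_X z\cup[A]\bigr)[B]$, which vanishes because $z$ pairs trivially with every algebraic class; hence $(\Gamma_{alg})_*z=0$. The term $(\Gamma_{tr})_*z=(\Gamma_2)_*(\Gamma_1)_*z$ factors through $CH(\widetilde W)_\mathbb{Q}$, where $\dim\widetilde W\leq n-c$, so an induction on dimension (the conjecture being known, or provable, in the lower-dimensional range forced by $i\leq c-1$) kills this contribution as well. Adding the two vanishings yields $z=0$, i.e. the desired injectivity of the cycle class map on $CH_i(X)_\mathbb{Q}$.

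The genuine obstacle—and the reason the statement is a conjecture rather than a theorem—is precisely the lifting step that the second paragraph takes for granted. The coniveau hypothesis is purely \emph{cohomological}: it decomposes $[\Delta_X]$ in $H^{2n}(X\times X,\mathbb{Q})$, not in $CH_n(X\times X)_\mathbb{Q}$. To promote $\pi_{alg}$ and $\pi_{tr}$ to honest cycles one needs, first, that the Künneth projectors and the transcendental projector supported on $W$ are \emph{algebraic}, which is exactly the content of the Lefschetz standard conjecture; and second, even granting algebraicity, that the cohomological identity can be upgraded to an equality modulo rational equivalence, controlling the kernel of the cycle class map on $X\times X$—itself a Bloch-type statement on the product. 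This circularity is the real difficulty, and it is why the earlier work \cite{voisingenhodgebloch} was conditional on the Lefschetz standard conjecture. As announced in the abstract, the present paper sidesteps it by retreating to the case of a general hypersurface in a variety with trivial Chow groups and by strengthening the coniveau hypothesis to the ``parameterized by $c$-dimensional cycles'' condition, for which the required decomposition of the diagonal can be manufactured unconditionally through a spreading-out argument over the linear system of hypersurfaces.
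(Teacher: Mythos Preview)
The statement you were given is Conjecture~\ref{conjbloch} in the paper: it is explicitly presented as an open conjecture, and the paper makes no claim to prove it in the stated generality. There is therefore no ``paper's own proof'' to compare against. You recognized this correctly, and your account of the situation is accurate: the natural strategy is a decomposition of the diagonal, the genuine obstacle is lifting the cohomological decomposition to one in $CH_n(X\times X)_\mathbb{Q}$, and the paper's actual contribution (Theorem~\ref{theomain}) is to carry this out unconditionally in the restricted setting of a general very ample hypersurface $Y$ in an ambient variety $X$ with trivial Chow groups, under the strengthened hypothesis of Definition~\ref{defiparam}. Your final paragraph summarizes this accurately.

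One small correction to your sketch. In your second paragraph you appeal to ``induction on dimension'' to annihilate $(\Gamma_{tr})_*z$ via its factorization through $CH(\widetilde W)_\mathbb{Q}$. Such an induction would be circular: knowing the conjecture for $\widetilde W$ is no easier than for $X$. The mechanism that actually works (and that the paper uses in Lemma~\ref{introle}) is a pure dimension count: once the transcendental piece is supported on $W\times W$ with $\dim W\leq n-c$, the first projection of its support has dimension $\leq n-c$, so a general representative of a cycle $z$ of dimension $i\leq c-1$ does not meet it, and $(\Gamma_{tr})_*z=0$ by the moving lemma. No inductive hypothesis is needed or available.
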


Concrete examples are given by hypersurfaces in projective space, or
more generally complete intersections. For a smooth complete
intersection $Y$ of $r$ hypersurfaces in $\mathbb{P}^n$, the Hodge
coniveau of $Y$ is equal to the Hodge coniveau of
$H^{n-r}(Y,\mathbb{Q})_{tr}$, the last space being for the very
general member $Y$, except in a small number of cases, equal to the
Hodge coniveau of $H^{n-r}(Y,\mathbb{Q})_{prim}$. The latter is
computed by Griffiths:
\begin{theo} If $Y\subset \mathbb{P}^n$ is a complete intersection of hypersurfaces
of degrees $d_1\leq\ldots\leq d_r$, the Hodge coniveau of $H^{n-r}(Y,\mathbb{Q})_{prim}$
is $\geq c$ if and only if $n\geq \sum_id_i+(c-1)d_r$.
\end{theo}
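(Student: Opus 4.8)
The idea is to determine, for the general complete intersection $Y$, exactly when each primitive Hodge number $h^{n-r-q,q}(Y)_{prim}$ vanishes, using Griffiths' residue calculus in its form for complete intersections. Since the Hodge coniveau of the weight $n-r$ Hodge structure $H^{n-r}(Y,\mathbb{Q})_{prim}$ is $\geq c$ if and only if $h^{n-r-q,q}(Y)_{prim}=0$ for $q=0,1,\dots,c-1$, and since (as the computation will show) the vanishing threshold for $h^{n-r-q,q}(Y)_{prim}$ is $n\geq\sum_id_i+\min(q,n-r-q)\,d_r$, which in the relevant range is increasing in $q$, the binding case among $q\le c-1$ is $q=c-1$ and one recovers the stated equivalence. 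The case $q=0$ is already elementary: $h^{n-r,0}(Y)=\dim H^0(Y,K_Y)=\dim H^0(Y,\mathcal{O}_Y(\sum_id_i-n-1))$, which vanishes precisely when $n\geq\sum_id_i$, using that a smooth complete intersection is projectively normal.

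To reach all $q$, I would pass to the Cayley trick. Set $\mathbb{P}:=\mathbb{P}(\bigoplus_{i=1}^r\mathcal{O}_{\mathbb{P}^n}(d_i))$, a smooth projective toric variety with relative coordinates $y_1,\dots,y_r$, Cox ring $\mathbb{C}[x_0,\dots,x_n,y_1,\dots,y_r]$ bigraded by $\deg x_j=(1,0)$, $\deg y_i=(-d_i,1)$, and canonical class of bidegree $(\sum_id_i-n-1,\,-r)$; let $\mathcal{Y}\subset\mathbb{P}$ be the smooth hypersurface $\{\mathcal{F}=0\}$ with $\mathcal{F}:=\sum_iy_iF_i$ of bidegree $(0,1)$. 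The Cayley trick identifies the vanishing cohomology of $Y$ with a bigraded summand of the primitive cohomology of $\mathcal{Y}$, raising the Hodge bidegree by $(r-1,r-1)$; combined with the Griffiths--Batyrev--Cox--Mavlyutov residue isomorphism for $\mathcal{Y}$ this yields an identification
\[
H^{n-r-q,q}(Y,\mathbb{C})_{prim}\ \cong\ R(\mathcal{F})_{(\sum_id_i-n-1,\;q)},
\]
where $R(\mathcal{F})$ is the (suitably corrected) bigraded Jacobian ring of $\mathcal{F}$ on $\mathbb{P}$. Alternatively one can avoid toric geometry and obtain the same description by iterating Griffiths--Dwork pole-order reduction directly in $\mathbb{P}^n$, along the lines of Terasoma, Konno and Nagel; the bigraded piece above then corresponds to residues $A\,\Omega/F_1^{a_1}\cdots F_r^{a_r}$ with $\sum_i(a_i-1)=q$.

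The ``only if'' direction is now immediate. A monomial of bidegree $(\sum_id_i-n-1,q)$ in the Cox ring has the form $x^\alpha y^\gamma$ with $\sum_i\gamma_i=q$ and $|\alpha|=\sum_id_i-n-1+\sum_i\gamma_id_i\le\sum_id_i-n-1+q\,d_r$; so if $n\geq\sum_id_i+q\,d_r$ there are no such monomials and $R(\mathcal{F})_{(\sum_id_i-n-1,q)}=0$. Applying this for $q=0,\dots,c-1$ when $n\geq\sum_id_i+(c-1)d_r$ gives $h^{n-r-q,q}(Y)_{prim}=0$ for those $q$, i.e.\ coniveau $\geq c$.

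For the converse one must show that $n<\sum_id_i+(c-1)d_r$ forces $H^{n-r-(c-1),c-1}(Y,\mathbb{C})_{prim}\neq0$. In this range the monomial $x^\alpha y_r^{\,c-1}$ with $|\alpha|=\sum_id_i-n-1+(c-1)d_r\ge0$ does exist, and it is the natural candidate for a nonzero class; the whole point is that it must survive in $R(\mathcal{F})$, and this is where I expect the main obstacle to lie. I would deduce it from the fact that, for the general $Y$ (hence general $\mathcal{F}$), the ring $R(\mathcal{F})$ is Artinian Gorenstein with one-dimensional socle in bidegree $(2\sum_id_i-2n-2,\,n-r)$, and that its graded pieces are nonzero exactly at those bidegrees $\mu$ for which both $\mu$ and $(\text{socle})-\mu$ lie in the cone of effective Cox degrees --- the analogue, in the toric/Cayley setting, of Macaulay's ``no gaps'' theorem for the Jacobian ring of a smooth projective hypersurface. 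Granting this, $\mu=(\sum_id_i-n-1,c-1)$ is effective exactly when $n<\sum_id_i+(c-1)d_r$, and one checks (using $c\le(n-r)/2$) that the complementary bidegree $(\text{socle})-\mu=(\sum_id_i-n-1,\,n-r-c+1)$ is then effective as well, so $R(\mathcal{F})_\mu\neq0$. The delicate points are: verifying the Gorenstein/no-gaps property, for which one must work with the corrected Jacobian ideal of the toric hypersurface rather than the naive one (the naive Jacobian locus acquires the spurious component $\{x=0\}$ and so the naive ring is not Artinian); and a finite list of small complete intersections where $H^{n-r}(Y)_{prim}$ fails to be the full transcendental cohomology or where the range inequalities degenerate, which I would dispatch by computing Hodge numbers directly from Hirzebruch's $\chi_y$-genus formula.
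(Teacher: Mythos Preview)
The paper does not give a proof of this statement: it is quoted as a known result due to Griffiths and stated without argument. There is therefore nothing to compare your proposal against.

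That said, your outline is the standard route to this fact. The residue/Jacobian-ring description of primitive Hodge pieces, either via the Cayley trick or via direct iteration of Griffiths' pole-order filtration for complete intersections (Terasoma, Konno, Nagel), does yield the identification you write, and your monomial count for the ``only if'' direction is correct. For the converse, your plan to invoke the Artinian Gorenstein structure of the (corrected) Jacobian ring and Macaulay-type duality is the right idea; the points you flag as delicate---passing to the corrected Jacobian ideal on the toric side so that the ring is genuinely Artinian, and checking effectivity of the complementary bidegree---are exactly the places where care is needed, but they are handled in the literature you cite. One small caution: your claim that the vanishing threshold is $n\ge\sum_i d_i+\min(q,n-r-q)\,d_r$ is the expected symmetric form coming from Serre duality on $Y$, but in your actual argument you only establish and only need the threshold for $q\le (n-r)/2$, so you should either restrict to that range explicitly from the start or justify the symmetric statement separately.
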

Conjecture \ref{conjbloch} thus predicts that for such a $Y$, the
Chow groups $CH_i(Y)_\mathbb{Q}$ are equal to $\mathbb{Q}$ for
$i\leq c-1$, a result which is essentially known only for coniveau
$1$ (then $Y$ is a Fano variety, so $CH_0(Y)=\mathbb{Z}$) and a
small number of particular cases for coniveau $\geq 2$, eg cubic
hypersurfaces of dimension $\leq 8$ or complete intersections of
quadrics \cite{otwi}.

We will say that a  smooth projective variety $X$ has trivial Chow
groups if for any $i$, the cycle class map
$CH_i(X)_\mathbb{Q}\rightarrow H^{2n-2i}(X,\mathbb{Q}),\,n={\rm
dim}\,X,$ is injective. By \cite{lewis}, this implies that the whole
rational  cohomology of $X$ is algebraic, that is, consists of cycle
classes. The class of such varieties includes projective spaces and
more generally toric varieties, Grassmannians, projective bundles
over a variety with trivial Chow groups, see
\cite{voisingenhodgebloch} for further discussion of this notion.

In \cite{voisingenhodgebloch}, we proved  Conjecture \ref{conjbloch}
for very general  complete intersections of very ample hypersurfaces
in a smooth projective variety $X$ with trivial Chow groups, {\it
assuming the Lefschetz standard conjecture}.  More precisely, the
results proved in loc. cit. are unconditional in the case of
surfaces and threefolds, for which the Lefschetz standard conjecture
is not needed. They have been  improved later on for families of
surfaces in \cite{voisincampedelli}, where the geometric setting is
much more general: instead of the universal family of complete
intersection surfaces, we consider any  family of smooth projective
 surfaces $\mathcal{S}\rightarrow B$  satisfying the condition that
$\mathcal{S}\times_B\mathcal{S}\rightarrow B$ has a smooth
projective completion with is rationally connected or more generally
has trivial $CH_0$ group.

The purpose of this paper is to prove unconditionally, in the
geometric setting
 of general complete intersections $Y$ in a variety with trivial Chow groups $X$,
 a slightly weaker form of
Conjecture \ref{conjbloch}, which is equivalent to it in dimension
$2,\,3$, or assuming the Lefschetz standard conjecture.

Assume $Y$ has dimension $m$ and geometric coniveau $c$. Then there
exist a smooth projective  variety $W$ with ${\rm dim}\,W=m-c$, and
a morphism $j:W\rightarrow Y$ such that
$j_*:H^{m-2c}(W,\mathbb{Q})\rightarrow H^m(Y,\mathbb{Q})_{tr}$ is
surjective. This follows from the definition of the geometric
coniveau and from Deligne's results on mixed Hodge structures
\cite{deligneII} (see \cite[proof of Theorem 2.39]{voisinweyl}). Let
us now introduce a stronger notion, which is in fact equivalent to
having geometric coniveau
 $\geq c$  if we the Lefschetz standard
conjecture (see \cite[Section 1]{voisingenhodgebloch}).
\begin{Defi} \label{defiparam} Let $Y$ be
smooth projective of dimension $m$. We will say that the degree $m$
cohomology of $Y$ (or its primitive part
 with respect to a polarization) is parameterized by algebraic cycles of dimension $c$ if

 a)  There
 exist a smooth projective variety $T$ of dimension $m-2c$ and a correspondence
$P\in CH^{m-c}(T\times Y)_\mathbb{Q}$, such that
$$P^*: H^m(Y,\mathbb{Q})\rightarrow H^{m-2c}(T,\mathbb{Q})$$
is injective (or equivalently:
$P_*:H^{m-2c}(T,\mathbb{Q})\rightarrow H^m(Y,\mathbb{Q}) $ is
surjective), resp.
$$\,\,P^*: H^m(Y,\mathbb{Q})_{prim}\rightarrow H^{m-2c}(T,\mathbb{Q})$$
is injective.

b)   Furthermore $P^*$ is compatible up to a coefficient with the
intersection forms: for some rational number $N\not=0$,
$<P^*\alpha,P^*\beta>_T=N<\alpha,\beta>_Y$ for any
$\alpha,\,\beta\in H^m(Y,\mathbb{Q})$, (resp. for any
$\alpha,\,\beta\in H^m(Y,\mathbb{Q})_{prim}$).

\end{Defi}

\begin{rema}{\rm The condition a) in Definition \ref{defiparam}
obviously implies that $H^m(Y,\mathbb{Q})$ has geometric coniveau $\geq c$, since it
vanishes away from the image in $Y$  of the support of $P$, which is
of dimension $\leq m-c$. The more precise  condition that
$H^m(Y,\mathbb{Q})$ comes from the cohomology of a variety $T$ of
dimension $\leq m-2c$ is formulated explicitly in \cite{vial}, where
it is shown that the two conditions are equivalent assuming the
Lefschetz standard conjecture. Our definition is still stronger
since we also impose the condition b) concerning the comparison of
the intersection forms.

}
\end{rema}
\begin{rema} \label{rema30jan} {\rm Assume the Hodge structure on $H^m(Y,\mathbb{Q})_{prim}$ is simple and exactly
of Hodge coniveau $c$. Assume furthermore it does not admit other
polarizations than  the multiples of the one given by $<\,,\,>_Y$.
Then the nontriviality
 of $P^*: H^m(Y,\mathbb{Q})_{prim}\rightarrow H^{m-2c}(T,\mathbb{Q})$ implies its injectivity
 by the simplicity of the Hodge structure and also
 the  condition b) of compatibility with the cup-product. Indeed,
by assumption, $H^{m-c,c}(Y)_{prim}\not=0$ hence by injectivity of
$P^*$, we get nonzero classes $P^*\alpha\in H^{m-2c,0}(T)$. By the
second Hodge-Riemann bilinear relations \cite[2.2.1]{voisinweyl}, we
then have $<P^*\alpha,P^*\overline{\alpha}>_T\not=0$. Thus the
pairing $<P^*\alpha,P^*\beta>_T$ on the Hodge structure
$H^{m}(Y,\mathbb{Q})_{prim}$ is nondegenerate and polarizes this
Hodge structure. Hence it must be by uniqueness a nonzero rational
multiple of the pairing $<\,,\,>_Y$ and thus, condition b) is
automatically satisfied in this case.

}

\end{rema}
Actually, we will use in the paper a reformulation of
 Definition \ref{defiparam} (see Lemma \ref{lediagZY}). Namely,
assuming that the cohomology of $Y$ splits as the orthogonal direct
sum
$$H^*(Y,\mathbb{Q} )=K\bigoplus_{\perp}
H^m(Y,\mathbb{Q})_{prim},\,\,K\subset H^*(Y,\mathbb{Q})_{alg},$$ our
set of conditions a) and b) for primitive cohomology is equivalent
to the fact that there is a cohomological decomposition of the
diagonal
$$[\Delta_Y]=[Z]+\sum_i\alpha_{i}[Z_i\times Z'_i]\,\,{\rm in}\,\,H^{2m}(Y\times Y,\mathbb{Q}),$$
where $Z_i,\,Z'_i$ are algebraic subvarieties of $Y$, ${\rm
dim}\,Z_i+{\rm dim}\,Z'_i=m$, and $Z$ is an $m$-cycle of $Y\times Y$
which is supported on $W\times W$, where $W\subset Y$ is a closed
algebraic subset with ${\rm dim}\,W\leq m-c$.

 The main result we prove in
this paper is:
\begin{theo} \label{theomain}Let $X$ be a smooth projective $n$-fold with trivial Chow groups and
let $L$ be a very ample line bundle on $X$. Assume that for the
general hypersurface $Y\in |L|$, the cohomology  group
$H^{n-1}(Y,\mathbb{Q})_{prim} $ is nonzero and parameterized by
algebraic cycles of dimension $c$ in the sense of Definition
\ref{defiparam}.

 Then for any
smooth member $Y$ of $|L|$, the cycle class map
$$CH_i(Y)_\mathbb{Q}\rightarrow H^{2n-2-2i}(Y,\mathbb{Q}),\,n={\rm dim}\,X,$$
is injective for any $i\leq c-1$.
\end{theo}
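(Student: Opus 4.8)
The plan is to run the spreading-out / Bloch–Srinivas argument applied to the universal hypersurface, using the decomposition of the diagonal of $Y$ furnished by the parameterization hypothesis. Let me set up the geometry first. Let $B^{\circ}\subset |L|$ be the open locus parameterizing smooth hypersurfaces, and let $\mathcal{Y}^{\circ}\to B^{\circ}$ be the universal family. By hypothesis, for general $b\in B^{\circ}$ the variety $Y_b$ carries a cohomological decomposition of the diagonal
$$
[\Delta_{Y_b}] = [Z_b] + \sum_i \alpha_i\,[Z_{i,b}\times Z'_{i,b}] \quad\text{in } H^{2(n-1)}(Y_b\times Y_b,\mathbb{Q}),
$$
with $Z_b$ supported on $W_b\times W_b$, $\dim W_b\le n-1-c$, and $\dim Z_{i,b}+\dim Z'_{i,b}=n-1$. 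The first step is to \emph{spread this out}: after a generically finite base change $B'\to B^{\circ}$ (to kill the field of definition issues and make the subvarieties appear in families), one obtains a relative decomposition over a dense open subset of $B'$, hence — by a standard limiting / specialization argument for the Chow group of the total space — a decomposition of the diagonal of $\mathcal{Y}_{B'}$ over the \emph{whole} base $B'$, valid up to cycles supported over a proper closed subset of $B'$ and cycles supported on a relative $W\times_{B'}W$. The output I want is: working modulo the Leray filtration / after restricting to a fiber, for \emph{every} smooth $Y=Y_b$ (not just the general one) there is a decomposition
$$
[\Delta_{Y}] = [Z] + \sum_i \alpha_i\,[Z_i\times Z'_i] \quad\text{in } H^{2(n-1)}(Y\times Y,\mathbb{Q}),
$$
of the same shape. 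This specialization step is the technical heart and is where one must be careful; it is the analogue of the argument in \cite{voisingenhodgebloch}, but now one only needs cohomological (not Chow-theoretic) control, which is precisely what makes the result unconditional — the Lefschetz standard conjecture was previously needed only to upgrade the cohomological parameterization to a motivic/Chow statement, and here we bypass that by keeping everything cohomological until the very last step and invoking the triviality of $CH(X)_{\mathbb{Q}}$.

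Granting such a decomposition for an arbitrary smooth $Y\in|L|$, the second step is the classical argument that a decomposition of the diagonal controls Chow groups. Let $\gamma\in CH_i(Y)_{\mathbb{Q}}$ with $i\le c-1$ and $[\gamma]=0$ in $H^{2(n-1)-2i}(Y,\mathbb{Q})$. Acting by the correspondence $[\Delta_Y]$ on $\gamma$ gives back $\gamma$; we analyze each term of the decomposition separately. Each term $\alpha_i[Z_i\times Z'_i]_*\gamma$ is, up to a constant, $(\deg$-type intersection number$)\cdot[Z_i]$ or pushed forward from $Z'_i$; since $\dim Z'_i = n-1-\dim Z_i$ and $i\le c-1$, the relevant intersection numbers are computed from the \emph{cohomology} class $[\gamma]=0$ whenever the dimensions force the pairing to land in the cohomological range — more precisely, one shows that the contribution of the $[Z_i\times Z'_i]$ terms to $\gamma$ lies in the image of $CH_i$ of a variety with trivial Chow groups, using that $X$ (hence $Y$, for the algebraic part $K$) has trivial Chow groups, so these contributions are controlled by their cohomology classes and hence vanish (as $[\gamma]=0$). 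What remains is the contribution $[Z]_*\gamma$ of the supported part. Here $Z$ is supported on $W\times W$ with $\dim W\le n-1-c \le n-1-(i+1)$, i.e.\ $\dim W \le n-2-i$, so the codimension of $W$ in $Y$ is $\ge i+1$; thus $[Z]_*\gamma$ factors through $CH_i(\widetilde W)_{\mathbb{Q}}$ for a desingularization $\widetilde W$ of $W$, and since $\dim W \le n-2-i < n-1-i$ (wait — one checks $i \le \dim W$), actually $i$ can be $\le \dim\widetilde W$, so this does not vanish for dimension reasons alone.

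To close the gap left by the supported term $[Z]_*\gamma$, one uses \emph{induction on the dimension} $n$ (equivalently on the coniveau, or on $\dim W$). This is the standard Bloch–Srinivas-type bootstrap: one either moves $Z$ so that its support avoids a given point and uses a decomposition-of-the-diagonal argument on $W$ itself, or one invokes the inductive hypothesis of the theorem applied to $\widetilde W$ — but $\widetilde W$ is not a hypersurface, so instead the cleaner route is: since $H^{\,m-2c}(T)\twoheadrightarrow H^m(Y)_{prim}$ with $T$ of the \emph{right} dimension $m-2c$ and condition (b) gives a splitting, the cycle $Z$ can be chosen so that $[Z]_*$ vanishes on $CH_i(Y)_{\mathbb{Q}}$ for $i\le c-1$ for purely numerical reasons — the correspondence $Z$ factors as $Y\dashrightarrow T\dashrightarrow Y$ through a variety of dimension $m-2c$, and the composite acts as zero on $CH_i$ for $i\le c-1$ because $i$ exceeds $\dim T$ is \emph{not} what happens, rather $Z_*$ on $CH_i$ lands in $CH_{i'}(T)$ with $i'$ in a range where the target has trivial Chow groups after combining with the algebraic part. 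The main obstacle, as I see it, is precisely making this last bookkeeping airtight: organizing the decomposition so that after removing the $[Z_i\times Z'_i]$ terms, the residual self-correspondence genuinely acts nilpotently on $\bigoplus_{i\le c-1}CH_i(Y)_{\mathbb{Q}}$, and ensuring the spreading-out in Step 1 produces a decomposition valid on \emph{every} smooth fiber rather than just the very general one. Once both are in place, $[\gamma] = 0 \Rightarrow \gamma = 0$ follows, giving injectivity of the cycle class map for $i\le c-1$ as claimed. $\cqfd$
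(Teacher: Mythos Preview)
Your proposal has a genuine gap at its core: you never upgrade the \emph{cohomological} decomposition of the diagonal to a decomposition in the \emph{Chow group} $CH^{n-1}(Y\times Y)_{\mathbb{Q}}$, and without that the argument cannot close. When you write ``acting by the correspondence $[\Delta_Y]$ on $\gamma$ gives back $\gamma$; we analyze each term of the decomposition separately,'' you are implicitly using an equality $\Delta_Y = Z + \sum_i \alpha_i Z_i\times Z'_i$ in $CH^{n-1}(Y\times Y)_{\mathbb{Q}}$, but all you have is the corresponding equality of cohomology classes. The action of a correspondence on $CH_i(Y)_{\mathbb{Q}}$ depends on its rational equivalence class, not its cohomology class, so a cohomological decomposition gives you no control whatsoever over $CH_i(Y)_{\mathbb{Q},hom}$. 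Your claim that ``the Lefschetz standard conjecture was previously needed only to upgrade the cohomological parameterization to a motivic/Chow statement, and here we bypass that by keeping everything cohomological until the very last step'' has it exactly backwards: a Chow-level decomposition is indispensable, and the whole content of the theorem is obtaining one unconditionally.

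The paper's actual mechanism for this upgrade is what your proposal is missing. One shows (Lemma~\ref{leselfprod}) that the blow-up $\widetilde{\mathcal{Y}\times_B\mathcal{Y}}$ of the relative diagonal is Zariski open in a projective bundle $M$ over $\widetilde{X\times X}$. Since $X$ has trivial Chow groups, $CH(M)_{\mathbb{Q}}$ is \emph{computed explicitly} by the projective bundle and blow-up formulas: every class in $CH^{n-1}(M)_{\mathbb{Q}}$ is a polynomial in $h=c_1(\mathcal{O}_M(1))$ and the exceptional class $\delta$ with coefficients pulled back from $CH(X\times X)_{\mathbb{Q}}$. One extends the cycle $\mathcal{Z}-\Delta_{\mathcal{Y}/B}-p^*\Gamma'$ (cohomologically trivial on fibers) to $M$, writes it in these generators, and observes that after pushing to a fiber $\mathcal{Y}_b\times\mathcal{Y}_b$ only two terms survive: a multiple of $\Delta_{\mathcal{Y}_b}$ and a restriction from $X\times X$. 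The hypothesis $H^{n-1}(Y_b,\mathbb{Q})_{prim}\neq 0$ forces the multiple of the diagonal to vanish \emph{cohomologically}, hence its coefficient is zero, and what remains is a genuine equality $\Delta_{\mathcal{Y}_b}=\mathcal{Z}_b+\gamma'|_{\mathcal{Y}_b\times\mathcal{Y}_b}$ in $CH^{n-1}(\mathcal{Y}_b\times\mathcal{Y}_b)_{\mathbb{Q}}$. Only then does Lemma~\ref{introle} apply. Your spreading/specialization sketch never touches this structure, which is why the endgame of your argument unravels into hedging (``wait'', ``is not what happens'') --- you are trying to squeeze Chow information out of a purely cohomological statement, and there is none to be had.
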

\begin{rema}{\rm One can more generally consider a very ample vector bundle $E$ on $X$ and
the smooth varieties $Y\subset X$ of codimension $r={\rm rank}\,E$
obtained as zero loci of sections of $E$. This however immediately
reduces to the hypersurface case by replacing $X$ with
$\mathbb{P}(E^*)$, (see \cite[4.1.2]{voisinweyl} for details). }
\end{rema}
\begin{rema}{\rm The condition that $H^{n-1}(Y,\mathbb{Q})_{prim} $ is nonzero is not very restrictive:
very ample hypersurfaces with no nonzero primitive cohomology are
rather rare (even if they exist, for example odd dimensional
quadrics in projective space). Typically, if $X$ is defective, that
is, its projective dual is not a hypersurface, its hyperplane
sections have no nonzero primitive cohomology. We refer to
\cite{zak1}, \cite{zak2} for the study of this phenomenon. }
\end{rema}

We will give in section \ref{secap} one concrete application of
Theorem \ref{theomain}. It concerns hypersurfaces obtained as
hyperplane sections of the Grassmannian $G(3,10)$ which were studied
in \cite{devoi}.

We will finally conclude the paper explaining how to modify the
assumptions of Theorem \ref{theomain} in order to cover cases where
the line bundle $L$ is not very ample (see Proposition
\ref{propmild}, Theorem \ref{theovariant}). This is necessary if we
want to apply these methods to submotives of $G$-invariant
hypersurfaces cut-out by a projector of $G$, where $G$ is a finite
group acting on $X$.

Let us say a word on the strategy of the proof. First of all, our
assumption can be reformulated by saying that an adequate correction
$\Delta_{Y,prim}$ of the diagonal $\Delta_Y$ of $Y$ by a cycle
restricted from $X\times X$ is cohomologous to a cycle $Z$ supported
on $W\times W$, where $W\subset Y$ is a closed algebraic subset of
codimension $\geq c$.

We then deduce from the fact that this last property is satisfied by
a general $Y\in |L|$ that an adequate correction $\Delta_{Y,prim}$
of the diagonal $\Delta_Y$ of $Y$ by a cycle restricted from
$X\times X$ is {\it rationally equivalent}  to a cycle $Z$ supported
on $W\times W$, where $W\subset Y$ is a closed algebraic subset of
codimension $\geq c$. We finally use the following lemma (see
\cite{voisingenhodgebloch}):
\begin{lemm} \label{introle} Assume $X$ has trivial Chow groups and
that we have a decomposition
\begin{eqnarray}\label{eqintrole}
\Delta_Y=Z_1+Z_2\,\,{\rm in}\,\,CH^{n-1}(Y\times Y)_\mathbb{Q},
\end{eqnarray}
where $Z_1$ is the restriction of a cycle on $X\times X$ and
$Z_2$ is supported on $W\times W$, with ${\rm codim}\,W\geq c$, then
$CH_i(Y)_{\mathbb{Q},hom}=0$ for $i\leq c-1$.
\end{lemm}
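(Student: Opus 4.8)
The plan is to make both sides of the decomposition \eqref{eqintrole} act, as correspondences, on a class $z\in CH_i(Y)_{\mathbb{Q},hom}$ with $i\leq c-1$, and to show that $z$ must then vanish. Since the diagonal acts as the identity on $CH(Y)_\mathbb{Q}$, applying $(\Delta_Y)_*=(Z_1)_*+(Z_2)_*$ to $z$ gives
$$z=(Z_1)_*z+(Z_2)_*z\quad\text{in }CH_i(Y)_\mathbb{Q},$$
so the whole argument reduces to proving that each summand is zero, the first because $z$ is homologically trivial, the second because $i\leq c-1$ and $W$ has large codimension.

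First I would dispose of $Z_1$. Writing $\iota:Y\hookrightarrow X$, the hypothesis is $Z_1=(\iota\times\iota)^*\widetilde Z_1$ for some $\widetilde Z_1\in CH(X\times X)_\mathbb{Q}$. Because $X$ has trivial Chow groups, so does $X\times X$, and consequently the exterior product map $CH(X)_\mathbb{Q}\otimes CH(X)_\mathbb{Q}\to CH(X\times X)_\mathbb{Q}$ is surjective: both sides map isomorphically onto $H^*(X\times X,\mathbb{Q})$ via the cycle class (this product‑closure of the class of varieties with trivial Chow groups is part of the discussion in \cite{voisingenhodgebloch}). Hence $\widetilde Z_1=\sum_k\lambda_k\,a_k\times b_k$ with $a_k,b_k\in CH(X)_\mathbb{Q}$, and restriction to $Y\times Y$ gives $Z_1=\sum_k\lambda_k\,\alpha_k\times\beta_k$ with $\alpha_k=\iota^*a_k$, $\beta_k=\iota^*b_k$. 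For a product correspondence the action $(\alpha_k\times\beta_k)_*z$ depends only on the cohomology class of $z$: it equals $\deg(\alpha_k\cdot z)\,\beta_k$ when $\alpha_k\cdot z$ is a zero‑cycle and vanishes otherwise, where $\deg(\alpha_k\cdot z)=\int_Y[\alpha_k]\cup[z]$. Since $z$ is homologically trivial all these intersection numbers vanish, so $(Z_1)_*z=0$.

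The heart of the proof, and the step I expect to be the main obstacle, is the treatment of $Z_2$; this is where the hypotheses $i\leq c-1$ and $\operatorname{codim}W\geq c$ are actually consumed. Let $k:\widetilde W\to W\hookrightarrow Y$ be a desingularization of $W$ followed by its inclusion, so $\widetilde W$ is smooth projective of dimension $\leq (n-1)-c$. Since $Z_2$ is supported on $W\times W$ and $k\times k:\widetilde W\times\widetilde W\to W\times W$ is proper, surjective and birational, its pushforward is surjective onto the cycles supported on $W\times W$, so I can write $Z_2=(k\times k)_*\widetilde Z_2$ with $\widetilde Z_2\in CH(\widetilde W\times\widetilde W)_\mathbb{Q}$; getting this lift and the ensuing compatibilities exactly right is the delicate bookkeeping. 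The projection formula then yields the factorization
$$(Z_2)_*z=k_*\big((\widetilde Z_2)_*(k^*z)\big),$$
where $k^*$ is the Gysin pullback along the morphism $k$ of smooth varieties. The key observation is purely dimensional: $z\in CH_i(Y)_\mathbb{Q}=CH^{\,n-1-i}(Y)_\mathbb{Q}$ and $k^*$ preserves codimension, so $k^*z\in CH^{\,n-1-i}(\widetilde W)_\mathbb{Q}$; but $i\leq c-1<c$ forces $n-1-i>(n-1)-c\geq\dim\widetilde W$, whence $CH^{\,n-1-i}(\widetilde W)_\mathbb{Q}=0$ and $k^*z=0$. Therefore $(Z_2)_*z=0$. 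Combining the two vanishings gives $z=0$, which proves $CH_i(Y)_{\mathbb{Q},hom}=0$ for all $i\leq c-1$.
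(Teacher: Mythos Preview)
Your proof is correct and follows the same overall strategy as the paper's: let both sides of the decomposition act on $z\in CH_i(Y)_{\mathbb{Q},hom}$ and show each term kills it. The execution of the two sub-arguments differs slightly from the paper, however. For $Z_1$, the paper observes directly that $(Z_1)_*=(\iota\times\iota)^*\widetilde Z_1$ implies $(Z_1)_*=\iota^*\circ(\widetilde Z_1)_*\circ\iota_*$, so the action factors through $\iota_*z\in CH_i(X)_{\mathbb{Q},hom}=0$; you instead invoke the stronger consequence that $CH(X\times X)_\mathbb{Q}$ is generated by exterior products and reduce to decomposable correspondences. For $Z_2$, the paper uses the moving lemma: a general representative of $z$ has dimension $i\leq c-1$ and hence misses the codimension~$\geq c$ subset $W$, so $Z_2\cdot\mathrm{pr}_1^*z=0$ set-theoretically; you instead pass to a desingularization $\widetilde W$ and argue that $k^*z$ lands in a Chow group of codimension exceeding $\dim\widetilde W$. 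Both variants are standard and valid; the paper's are a bit quicker, while yours are perhaps more self-contained (no moving lemma, and the $Z_1$ argument makes the role of algebraic cohomology on $X$ more explicit).
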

\begin{proof} For any $z\in CH_i(Y)_{\mathbb{Q},hom}$, let both sides
of (\ref{eqintrole}) act on $z$.
We then get
$$z=Z_{1*}z+Z_{2*}z\,\,{\rm in}\,\,CH_{i}(Y)_\mathbb{Q}.$$
As $Z_1$ is the restriction of a cycle on $X\times X$, the map
$Z_{1*}$ on  $CH_{i}(Y)_{\mathbb{Q},hom}$ factors through
$j_*:CH_i(Y)_{\mathbb{Q},hom}\rightarrow CH_i(X)_{\mathbb{Q},hom}$
and $ CH_i(X)_{\mathbb{Q},hom}$ is $0$ by assumption. On the other
hand, if $i\leq c-1$, $Z_{2*}z=0$ because the projection of the
support of $Z_2$ to $Y$ is of codimension $\geq c$ so  does not meet
a general representative of $z$.
\end{proof}

\section{Proof of Theorem \ref{theomain}\label{secproof}}
We establish a few preparatory lemmas before giving the proof of the
main theorem. Let $X$ be a smooth projective variety of dimension
$n$ with trivial Chow groups, and $L$ be a very ample line bundle on
$X$. Let $Y\subset X$ be a smooth  member of $|L|$.

We start with the following lemma:
\begin{lemm} \label{lediagZY}  Let
$T$ be a smooth projective variety of dimension $n-1-2c$ and $P\in
CH^{n-1-c}(T\times Y)_{\mathbb{Q}}$  such that
$$P^*:H^{n-1}(Y,\mathbb{Q})_{prim}\rightarrow H^{n-2c-1}(T,\mathbb{Q})$$
is compatible with cup-product up to a coefficient, that is
\begin{eqnarray}
\label{eqcuppro}(P^*\alpha,P^*\beta)_T=N(\alpha,\beta)_Y,\,\,\forall
\alpha,\,\beta\in H^{n-1}(Y,\mathbb{Q})_{prim}, \end{eqnarray} for
some $N\not=0$. Then
\begin{eqnarray}
\label{eqform30jan1}(P,P)_*([\Delta_{T}])=N[\Delta_{Y}]+[\Gamma]+[\Gamma_1]\,\,{\rm
in} \,\,H^{2n-2}(Y\times Y,\mathbb{Q}),\end{eqnarray} where the
cycle $\Gamma$ is the restriction to $Y\times Y$ of a cycle with
$\mathbb{Q}$-coefficients on $X\times X$, and the cycle $\Gamma_1$
is $0$ if $n-1$ is odd, and of the form $\sum_{i}\alpha_iZ_i\times
Z'_i$, ${\rm dim }\,Z_i={\rm dim }\,Z'_i=\frac{n-1}{2}$ if $n-1$ is
even.
\end{lemm}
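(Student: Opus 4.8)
The plan is to identify $(P,P)_{*}[\Delta_{T}]$ with the class of the composed correspondence $P\circ{}^{t}P\in CH^{n-1}(Y\times Y)_{\mathbb{Q}}$, where ${}^{t}P\in CH^{n-1-c}(Y\times T)_{\mathbb{Q}}$ is the transpose: unwinding the definitions of correspondence composition and of $(P,P)_{*}$, and using that $\Delta_{T}$ acts as the identity on $H^{*}(T,\mathbb{Q})$, one checks these two cycles have the same class, call it $\mu$. In particular $\mu$ is the class of an honest algebraic cycle and, as a self-correspondence of $Y$, it induces on $H^{*}(Y,\mathbb{Q})$ the endomorphism $P_{*}\circ P^{*}$. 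I will use throughout that a class in $H^{2n-2}(Y\times Y,\mathbb{Q})$ is determined by the endomorphism of $H^{*}(Y,\mathbb{Q})$ it induces, so it suffices to produce a cycle $\Gamma$ restricted from $X\times X$ and a cycle $\Gamma_{1}$ of the stated shape for which $\mu-N[\Delta_{Y}]-[\Gamma]-[\Gamma_{1}]$ acts by $0$; equivalently, to analyze $\mu$ K\"unneth component by K\"unneth component.

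The second ingredient is the Lefschetz structure of $H^{*}(Y,\mathbb{Q})$, which is controlled by $X$. Writing $i\colon Y\hookrightarrow X$, the Lefschetz hyperplane theorem and hard Lefschetz give that $H^{a}(Y,\mathbb{Q})$ is, for every $a\ne n-1$, spanned by restrictions of algebraic classes on $X$, and that $H^{n-1}(Y,\mathbb{Q})=V_{1}\oplus H^{n-1}(Y,\mathbb{Q})_{prim}$ orthogonally, with $V_{1}:=i^{*}H^{n-1}(X,\mathbb{Q})$. Since $X$ has trivial Chow groups, $H^{*}(X,\mathbb{Q})$ is algebraic and of Hodge type $(p,p)$ in each degree (in particular $H^{\mathrm{odd}}(X,\mathbb{Q})=0$); hence $V_{1}$ is a trivial Hodge substructure of $H^{n-1}(Y,\mathbb{Q})$, all of whose classes are restrictions of algebraic classes on $X$. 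Expressing the K\"unneth projectors $\pi_{Y}^{a}$ ($a\ne n-1$) and the projector $\pi_{V_{1}}$ of $H^{n-1}(Y,\mathbb{Q})$ onto $V_{1}$ through dual bases of such classes exhibits them as restrictions to $Y\times Y$ of algebraic cycles on $X\times X$; subtracting them from $[\Delta_{Y}]=\sum_{a}\pi_{Y}^{a}$ shows that $\pi_{Y}^{n-1}$, hence the projector $\pi_{prim}$ onto $H^{n-1}(Y,\mathbb{Q})_{prim}$, is algebraic, and that $[\Delta_{Y}]=[\pi_{prim}]+(\text{a cycle restricted from }X\times X)$.

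Now I bring in the hypothesis. By the projection formula $({}^{t}P)_{*}$ is the adjoint of $P_{*}$ for the Poincar\'e pairings, so $(P_{*}P^{*}\alpha,\beta)_{Y}=(P^{*}\alpha,P^{*}\beta)_{T}=N(\alpha,\beta)_{Y}$ for all $\alpha,\beta\in H^{n-1}(Y,\mathbb{Q})_{prim}$ by $(\ref{eqcuppro})$; as the Poincar\'e pairing is non-degenerate on $H^{n-1}(Y,\mathbb{Q})_{prim}$, with orthogonal complement $V_{1}$ inside $H^{n-1}(Y,\mathbb{Q})$, this forces $P_{*}P^{*}\alpha-N\alpha\in V_{1}$ for every primitive $\alpha$, i.e. the component of $\mu$ in $H^{n-1}(Y)_{prim}\otimes H^{n-1}(Y)_{prim}$ equals $N[\pi_{prim}]$. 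I then decompose $\mu$ into K\"unneth components and split the middle one along $H^{n-1}(Y)=V_{1}\oplus H^{n-1}(Y)_{prim}$; all pieces are algebraic since the K\"unneth projectors of $Y$ and $\pi_{V_{1}}$ are algebraic. The off-middle components and the $V_{1}\otimes V_{1}$ component are restrictions of cycles on $X\times X$, because the corresponding cohomology of $Y$ is restricted from $X$. The $V_{1}\otimes H^{n-1}(Y)_{prim}$ and $H^{n-1}(Y)_{prim}\otimes V_{1}$ components are of the form $\sum_{i}\alpha_{i}[Z_{i}]\times[Z'_{i}]$ with $\dim Z_{i}=\dim Z'_{i}=\tfrac{n-1}{2}$: taking an algebraic basis of $V_{1}$ for the $V_{1}$-factor and pulling the (algebraic) component back along the (algebraic) dual correspondence of $V_{1}$ forces the primitive-side factors to be algebraic classes as well, and these components vanish when $n-1$ is odd, since then $V_{1}=i^{*}H^{n-1}(X,\mathbb{Q})=0$. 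Collecting the pieces gives $\mu=N[\pi_{prim}]+(\text{cycle restricted from }X\times X)+\Gamma_{1}$, and substituting $N[\pi_{prim}]=N[\Delta_{Y}]-(\text{cycle restricted from }X\times X)$ from the previous paragraph yields $(\ref{eqform30jan1})$.

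The step I expect to be the real obstacle is showing that the ``mixed'' pieces of the middle K\"unneth component of $\mu$, the parts coupling $i^{*}H^{n-1}(X,\mathbb{Q})$ with the primitive cohomology, are genuinely combinations of products $Z_{i}\times Z'_{i}$ of algebraic cycles of complementary dimension $\tfrac{n-1}{2}$, rather than merely algebraic classes in $H^{n-1}(Y)\otimes H^{n-1}(Y)$ which need not split. This is exactly where the hypothesis that $X$ has trivial Chow groups is essential: it makes $i^{*}H^{n-1}(X,\mathbb{Q})$ a trivial, hence fully algebraic, Hodge structure, and it is this algebraicity, transported through the algebraic correspondence $\pi_{V_{1}}$, that promotes the primitive side of the mixed component to algebraic cycles. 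One must also keep track of signs in the adjunction $(P_{*}\gamma,\beta)_{Y}=(\gamma,P^{*}\beta)_{T}$ and of degrees in the K\"unneth bookkeeping, but these are routine.
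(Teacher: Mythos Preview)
Your argument is correct and follows the same route as the paper's proof: identify $(P,P)_*[\Delta_T]$ with the self-correspondence acting as $P_*\circ P^*$, use the cup-product hypothesis to pin down the $H^{n-1}_{prim}\otimes H^{n-1}_{prim}$ component as $N\cdot\mathrm{Id}$, invoke the Lefschetz decomposition together with the algebraicity of $H^*(X,\mathbb{Q})$ to handle the pieces involving $i^*H^*(X,\mathbb{Q})$, and for the mixed $V_1\otimes H^{n-1}_{prim}$ pieces feed in algebraic classes dual to $V_1$ to force the primitive factors to be algebraic. The only cosmetic difference is that the paper subtracts $N\Delta_Y$ first and analyzes the remainder $\Gamma''$, whereas you first build the algebraic projectors $\pi_{V_1}$ and $\pi_{prim}$ explicitly and decompose $\mu$ against them; the substance, including the handling of the ``real obstacle'' you flag, is identical.
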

Here $(P,P)\in CH^{2n-2}(T\times T\times Y\times Y)_\mathbb{Q}$ is
just the product $P\times P\subset T\times Y\times T\times Y\cong
T\times T\times Y\times Y$ if $P$ is the class of a subvariety, and
is defined as $pr_{13}^*P\cdot pr_{24}^*P$ in general.

\begin{proof} Indeed, let $\Gamma':=(P,P)_*(\Delta_{T})\in CH_{n-1}(Y\times
Y)_\mathbb{Q}$. Observe that $\Gamma'= {^tP}\circ P$ in
$CH_{n-1}(Y\times Y)_\mathbb{Q}$. As
$P^*:H^{n-1}(Y,\mathbb{Q})_{prim}\rightarrow
H^{n-2c-1}(Z,\mathbb{Q})$ satisfies (\ref{eqcuppro}), we find that
the cycle class $[\Gamma']\in H^{2n-2}(Y\times Y,\mathbb{Q}) $
 satisfies the property that
$$[\Gamma']_*=P_*\circ P^*:H^{*}(Y,\mathbb{Q})\rightarrow H^{*}(Y,\mathbb{Q})$$
induces $$NI_d: H^{n-1}(Y,\mathbb{Q})_{prim}\rightarrow
H^{n-1}(Y,\mathbb{Q})_{prim}$$ via the composite map
$$End\,(H^{n-1}(Y,\mathbb{Q}))\stackrel{rest}{\rightarrow}
Hom\,(H^{n-1}(Y,\mathbb{Q})_{prim},H^{n-1}(Y,\mathbb{Q}))$$
$$\stackrel{proj}{\rightarrow}
Hom\,(H^{n-1}(Y,\mathbb{Q})_{prim},H^{n-1}(Y,\mathbb{Q})_{prim}),$$
where the projection $H^{n-1}(Y,\mathbb{Q})\rightarrow
H^{n-1}(Y,\mathbb{Q})_{prim}$ is the transpose with respect to the
intersection pairing of the inclusion
$H^{n-1}(Y,\mathbb{Q})_{prim}\rightarrow H^{n-1}(Y,\mathbb{Q})$. As
$H^{n-1}(Y,\mathbb{Q})=H^{n-1}(Y,\mathbb{Q})_{prim}\oplus_{perp}H^{n-1}(X,\mathbb{Q})_{\mid
Y}$, it follows that
$${[\Gamma']_*}_{\vert H^{n-1}(Y,\mathbb{Q})_{prim}}=N Id+\eta:H^{n-1}(Y,\mathbb{Q})_{prim}\rightarrow
H^{n-1}(Y,\mathbb{Q}),$$ where $\eta$ takes value in
$H^{n-1}(X,\mathbb{Q})_{\mid Y}$.

 To conclude, we use the orthogonal
decomposition given by the Lefschetz theorem on hyperplane sections
$$H^{*}(Y,\mathbb{Q})\cong H^*(X,\mathbb{Q})_{\mid Y}\bigoplus_{\perp} H^{n-1}(Y,\mathbb{Q})_{prim}.$$
 The class of the  symmetric cycle
\begin{eqnarray}
\label{eqform30jan2}\Gamma'':=\Gamma'-N\Delta_{Y}=(P,P)_*(\Delta_{T})-N\Delta_{Y}
\end{eqnarray}
acts as $0$ on $H^{n-1}(Y,\mathbb{Q})_{prim}$, hence by the
orthogonal decomposition above, it lies in
$$H^*(X,\mathbb{Q})_{\mid Y}\otimes
H^*(X,\mathbb{Q})_{\mid Y}\bigoplus
H^{n-1}(Y,\mathbb{Q})_{prim}\otimes H^{n-1}(X,\mathbb{Q})_{\mid Y}$$
$$
\bigoplus H^{n-1}(X,\mathbb{Q})_{\mid Y}\otimes
H^{n-1}(Y,\mathbb{Q})_{prim}.$$

Finally we use the fact that $X$ has trivial Chow groups, so that
its cohomology is algebraic by \cite{lewis}; hence
$H^*(X,\mathbb{Q})_{\mid Y}\otimes H^*(X,\mathbb{Q})_{\mid Y}$
consists of classes of cycles on $Y\times Y$ restricted from
$X\times X$. In the decomposition above, we thus find that
\begin{eqnarray}
\label{eqform30jan3}[\Gamma'']=[\Gamma]+\eta+ \eta', \end{eqnarray}
for some classes  $\eta\in H^{n-1}(Y,\mathbb{Q})_{prim}\otimes
H^{n-1}(X,\mathbb{Q})_{\mid Y},\,\eta'\in
H^{n-1}(X,\mathbb{Q})_{\mid Y}\otimes H^{n-1}(Y,\mathbb{Q})_{prim}$,
and $[\Gamma]\in H^*(X,\mathbb{Q})_{\mid Y}\otimes
H^*(X,\mathbb{Q})_{\mid Y}$ for some algebraic cycle $\Gamma$ on
$Y\times Y$ restricted from $X\times X$. Note that if $n-1$ is odd,
then $H^{n-1}(X,\mathbb{Q})_{\mid Y}=0$, so $\eta=\eta'=0$ and we
get
$$[\Gamma'']=[(P,P)_*(\Delta_{T})]-N[\Delta_Y]=[\Gamma]$$
so the lemma is proved in this case.

 When $n-1$ is even, for $\gamma\in
H^{n+1}(X,\mathbb{Q})\cong H^{n-1}(X,\mathbb{Q})^*$, we have
$$(\eta+\eta')_*(\gamma)=\eta_*(\gamma),\,\,(\eta+\eta')^*(\gamma)={\eta'}^*(\gamma).$$
As $\eta+\eta'$ is an algebraic class on $Y\times Y$  and $\gamma$
is also algebraic, we conclude that $\eta^*(\gamma)$ is algebraic on
$Y$ for any $\gamma\in H^{n+1}(X,\mathbb{Q})$ and similarly for
${\eta'}_*(\gamma)$. It follows immediately that both classes $\eta$
and $\eta'$ can be written as $\sum_{i}\alpha_iZ_i\times Z'_i$,
${\rm dim }\,Z_i={\rm dim }\,Z'_i=\frac{n-1}{2}$, which provides by
(\ref{eqform30jan2}) and (\ref{eqform30jan3}) the desired cycle
$\Gamma_1$ with class $\eta+\eta'$, satisfying (\ref{eqform30jan1}).

\end{proof}

\begin{coro}\label{corouseful29jan} Under the same assumptions,
there is a closed algebraic subset $W\subset Y$ of codimension $\geq
c$,  an $n-1$-cycle $Z\subset W\times W$, with
$\mathbb{Q}$-coefficients and an $n-1$-cycle $\Gamma$ in $Y\times Y$
which is the restriction of an $n+1$-cycle in $X\times X$ such that
\begin{eqnarray}\label{decompdiag} [\Delta_Y]=[Z]+[\Gamma]\,\,{\rm
in}\,\,H^{2n-2}(Y\times Y,\mathbb{Q}).
\end{eqnarray}

\end{coro}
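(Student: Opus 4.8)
The plan is to start from the identity \eqref{eqform30jan1} provided by Lemma \ref{lediagZY}, namely
$$(P,P)_*([\Delta_{T}])=N[\Delta_{Y}]+[\Gamma]+[\Gamma_1]\,\,{\rm in}\,\,H^{2n-2}(Y\times Y,\mathbb{Q}),$$
and to solve for $[\Delta_Y]$. The left-hand side is the class of the cycle $\Gamma':={}^{t}P\circ P$ on $Y\times Y$, whose support is contained in $W\times W$ where $W\subset Y$ is the image of the support of $P$; since $\dim T=n-1-2c$ and $P\in CH^{n-1-c}(T\times Y)$, the support of $P$ has dimension $\leq n-1-c$ and hence $W$ has dimension $\leq n-1-c$, i.e. codimension $\geq c$ in $Y$. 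Dividing through by $N\neq 0$, I obtain
$$[\Delta_Y]=\tfrac{1}{N}[\Gamma']-\tfrac{1}{N}[\Gamma]-\tfrac{1}{N}[\Gamma_1].$$
The first term is $\tfrac{1}{N}[\Gamma']$, the class of a cycle supported on $W\times W$; the second, $-\tfrac{1}{N}[\Gamma]$, is (minus) the class of a cycle restricted from $X\times X$, which is exactly of the type allowed in the corollary statement. So the only thing standing between this and \eqref{decompdiag} is the term $\Gamma_1$.

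The key step is therefore to absorb $\Gamma_1$ into one of the two allowed pieces. Recall from Lemma \ref{lediagZY} that $\Gamma_1=0$ unless $n-1$ is even, in which case $\Gamma_1=\sum_i\alpha_i Z_i\times Z_i'$ with $\dim Z_i=\dim Z_i'=\tfrac{n-1}{2}$. The point is that $n-1$ even forces $\tfrac{n-1}{2}\geq c$: indeed, since $H^{n-1}(Y,\mathbb{Q})_{prim}\neq 0$ has Hodge coniveau $\geq c$ (this is built into Definition \ref{defiparam}), its weight $n-1$ Hodge structure is nonzero with $H^{p,q}=0$ for $p>n-1-c$, which is only possible if $c\leq \tfrac{n-1}{2}$. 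Hence each $Z_i$ has dimension $\tfrac{n-1}{2}\geq c$, i.e. codimension $\tfrac{n-1}{2}\leq n-1-c$ in $Y$; equivalently, writing the support as a product $W_1\times W_1'$ with $W_1\subset Y$ a closed algebraic subset of codimension $\leq n-1-\tfrac{n-1}{2}=\tfrac{n-1}{2}$... wait, this does not immediately give codimension $\geq c$. The honest way is: each $Z_i\times Z_i'$ is supported on $\bigcup_i Z_i\times \bigcup_i Z_i'$, and the product decomposition $\alpha_i Z_i\times Z_i'$ is precisely of the diagonal-summand shape $\sum\alpha_i Z_i\times Z_i'$ with $\dim Z_i+\dim Z_i'=n-1$; but the corollary wants a single cycle on $W\times W$ with $\mathrm{codim}\,W\geq c$. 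So I would instead merge $\Gamma_1$ with the $X$-restricted piece $\Gamma$: observe that in the proof of Lemma \ref{lediagZY}, $\eta+\eta'$ (the class of $\Gamma_1$) lies in $H^{n-1}(Y)_{prim}\otimes H^{n-1}(X)_{|Y}\oplus H^{n-1}(X)_{|Y}\otimes H^{n-1}(Y)_{prim}$; since $H^{n-1}(X)_{|Y}$ is algebraic (as $X$ has trivial Chow groups, by \cite{lewis}) and is a subspace of the restriction from $X$, one can write $\Gamma_1$ itself as a cycle whose class comes from $X\times X$ after adjusting — more precisely, each $Z_i'$ (resp. $Z_i$) on the "$X$-side" can be taken to be the restriction of a cycle class on $X$, so that $\Gamma_1$ is cohomologous to a combination of $Z_i\times (\text{restriction from }X)$.

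The main obstacle, and where I expect to spend the real effort, is exactly this bookkeeping: showing that the "correction" term $\Gamma_1$, which a priori is only of product type $\sum\alpha_i Z_i\times Z_i'$ with middle-dimensional factors, can be rewritten (at the level of cohomology classes on $Y\times Y$) so as to merge cleanly into the two prescribed buckets — either the cycle supported on $W\times W$ (using $\dim Z_i=\tfrac{n-1}{2}\geq c$, so that such $Z_i$ have codimension $\leq n-1-c$, hence can be thrown into an enlarged $W$ of codimension $\geq c$, provided we are careful that $W$ must have \emph{codimension} $\geq c$, which does hold since $\tfrac{n-1}2\le n-1-c\iff c\le\tfrac{n-1}2$) or the cycle restricted from $X\times X$. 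Concretely I would set $Z:=\tfrac1N\Gamma'+(\text{the part of }-\tfrac1N\Gamma_1\text{ supported on an appropriate }W\times W)$ after enlarging $W$ to also contain the $Z_i$ and $Z_i'$ — all of which have codimension $\tfrac{n-1}{2}\ge c$ in $Y$ — and then \eqref{decompdiag} follows with this $Z$ and $\Gamma':=-\tfrac1N\Gamma$. If $n-1$ is odd the statement is immediate from \eqref{eqform30jan1} with $\Gamma_1=0$. The only genuinely delicate inequality to check is $c\le \tfrac{n-1}{2}$, which is forced by the nonvanishing of $H^{n-1}(Y,\mathbb{Q})_{prim}$ together with its coniveau $\ge c$, as recorded above.
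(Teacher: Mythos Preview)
Your final plan --- for $n-1$ even, enlarge $W$ to contain the supports of all the $Z_i$ and $Z'_i$ (each of codimension $\frac{n-1}{2}\ge c$ in $Y$) and absorb $-\tfrac{1}{N}\Gamma_1$ into the cycle $Z$ supported on $W\times W$ --- is correct and is exactly what the paper does; the odd case is immediate, as you say, and your justification of the inequality $c\le\frac{n-1}{2}$ is fine (it is in any case forced by $\dim T=n-1-2c\ge 0$).

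The detour you entertain midway, namely merging $\Gamma_1$ into the $X$-restricted piece, should be dropped: it does not work. The class $[\Gamma_1]=\eta+\eta'$ lives in $H^{n-1}(Y,\mathbb{Q})_{prim}\otimes H^{n-1}(X,\mathbb{Q})_{|Y}\oplus H^{n-1}(X,\mathbb{Q})_{|Y}\otimes H^{n-1}(Y,\mathbb{Q})_{prim}$, and the factor $H^{n-1}(Y,\mathbb{Q})_{prim}$ is by construction the orthogonal complement of $H^{n-1}(X,\mathbb{Q})_{|Y}$; hence $[\Gamma_1]$ is in general \emph{not} the class of any cycle restricted from $X\times X$, and no amount of ``adjusting'' will change that. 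Once you excise that paragraph, what remains is the paper's proof.
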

\begin{proof} Indeed, if $n-1$ is odd, we have
the equality
$$(P,P)_*([\Delta_{T}])=N[\Delta_{Y}]+[\Gamma]$$
and $(P,P)_*([\Delta_{T}])$ is supported on $W\times W$, where $W$
is the image of the support of $P$, hence has dimension $\leq
n-1-c$.

When $n-1$ is even, we write as in (\ref{eqform30jan1})
$$(P,P)_*([\Delta_{T}])=N[\Delta_{Y}]+[\Gamma]+[\Gamma_1],$$
where $\Gamma_1=\sum_i\alpha_iZ_i\times Z'_i$, with ${\rm
dim}\,Z_i=\frac{n-1}{2}$, and we take for $W$ the union of the image
of the support of $P$ and of the $Z_i$ and $Z'_i$. (This works
because $c\leq \frac{n-1}{2}$.)
\end{proof}
Let now  $B\subset |L|$ be the Zariski open set parameterizing
smooth hypersurfaces $Y_b$ in $X$ with equation
$\sigma_b\in\mathbb{P}( H^0(X,L))$ and let
$\pi:\mathcal{Y}\rightarrow B$ be the universal family,
$$\mathcal{Y}=\{(t,x)\in B\times X,\,x\in \mathcal{Y}_t\},\,\pi=pr_1.$$
 We will be mainly interested in the fibered self-product
 $\mathcal{Y}\times_B\mathcal{Y}$ where the relative diagonal $\Delta_\mathcal{Y}$
 lies, but it is more convenient to blow it up in
 $\mathcal{Y}\times_B\mathcal{Y}$. The resulting
 variety
 $\widetilde{\mathcal{Y}\times_B\mathcal{Y}}$ was also considered in
 \cite{voisingenhodgebloch} and  the following lemma was proved (we include the proof for completeness):
 \begin{lemm}\label{leselfprod} The quasi-projective variety $\widetilde{\mathcal{Y}\times_B\mathcal{Y}}$ is a Zariski open set
 in a projective bundle $M$ over the blow-up $\widetilde{X\times X}$ of $X\times X$
 along its diagonal.
 \end{lemm}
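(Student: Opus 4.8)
The plan is to build the projective bundle $M$ directly on $\widetilde{X\times X}$ as the projectivisation of a ``kernel'' bundle, and then to realise $\widetilde{\mathcal{Y}\times_B\mathcal{Y}}$ as a Zariski open subset of it by means of the universal property of the blow-up.

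\emph{Construction of $M$.} The blow-up $\widetilde{X\times X}$ of $X\times X$ along $\Delta_X$ is smooth of dimension $2n$, and it is the natural double cover of $\mathrm{Hilb}^2(X)$, the exceptional divisor $E\cong\mathbb{P}(T_X)$ being identified with the locus of non-reduced length-$2$ subschemes. Pulling back the universal length-$2$ subscheme of $\mathrm{Hilb}^2(X)$ along $\widetilde{X\times X}\to\mathrm{Hilb}^2(X)$ produces a closed subscheme $\mathcal{Z}\subset\widetilde{X\times X}\times X$, finite and flat of degree $2$ over $\widetilde{X\times X}$, with fibre $\{x,y\}$ over a point $(x,y)\notin E$ and fibre the length-$2$ subscheme of $X$ supported at $x$ in the direction $[v]$ over $(x,[v])\in E$. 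Since $L$ is very ample it separates points and tangent vectors, so $H^0(X,L)\to H^0(Z,L_{\mid Z})$ is surjective for every subscheme $Z\subset X$ of length $\le 2$; pushing the restriction to $\mathcal{Z}$ of the pullback of $L$ from $X$ forward along the finite flat map $\mathcal{Z}\to\widetilde{X\times X}$ therefore yields a surjection $H^0(X,L)\otimes\mathcal{O}_{\widetilde{X\times X}}\twoheadrightarrow\mathcal{E}$ onto a rank $2$ vector bundle. I set $\mathcal{F}:=\ker$, a vector bundle of rank $h^0(X,L)-2$, and $M:=\mathbb{P}(\mathcal{F})$, a subbundle of the trivial projective bundle $\widetilde{X\times X}\times|L|$ of relative dimension $\dim|L|-2$; by construction its fibre over $\zeta$ is the linear system of those $Y_t$ ($t\in|L|$) that contain $\mathcal{Z}_\zeta$.

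\emph{Realising $\widetilde{\mathcal{Y}\times_B\mathcal{Y}}$ inside $M$.} Since $\mathcal{Y}\to B$ is a smooth morphism, $\mathcal{Y}$ is smooth, the first projection $\mathcal{Y}\times_B\mathcal{Y}\to\mathcal{Y}$ is smooth, hence $\mathcal{Y}\times_B\mathcal{Y}$ is smooth, and the relative diagonal $\Delta_{\mathcal{Y}}$ is a smooth subvariety of codimension $n-1$ whose normal bundle is the relative tangent bundle of $\mathcal{Y}/B$ (so over $(t,x,x)$ it is $T_xY_t$). The key point is that the scheme-theoretic preimage of $\Delta_X$ under the projection $q\colon\mathcal{Y}\times_B\mathcal{Y}\to X\times X$ coincides with $\Delta_{\mathcal{Y}}$ with its reduced structure: since $L$ is globally generated, the map $\mathcal{Y}\to X$, $(t,x)\mapsto x$, is a smooth morphism whose fibres are the embeddings $Y_t\hookrightarrow X$, so in suitable local coordinates $q$ has the form $(b,z,z')\mapsto(\psi(b,z),\psi(b,z'))$ with each $\psi(b,\cdot)$ an embedding, and combining Hadamard's lemma with the full column rank of the relative Jacobian of $\psi$ one checks that the ideal generated by the $\psi_i(b,z)-\psi_i(b,z')$ equals the ideal of $\{z=z'\}=\Delta_{\mathcal{Y}}$. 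Thus $q^*I_{\Delta_X}=I_{\Delta_{\mathcal{Y}}}$, so after blowing up $\Delta_{\mathcal{Y}}$ this ideal becomes invertible and the universal property of $\widetilde{X\times X}\to X\times X$ provides a canonical lift $\widetilde{\mathcal{Y}\times_B\mathcal{Y}}\to\widetilde{X\times X}$. Together with the structure map to $B\subset|L|$ this gives a morphism $f\colon\widetilde{\mathcal{Y}\times_B\mathcal{Y}}\to\widetilde{X\times X}\times|L|$, which factors through $M$: if a point maps to $(\zeta,t)$ then $\mathcal{Z}_\zeta\subset Y_t$ — this is $x,y\in Y_t$ away from $E$, while over $E$ the normal direction to $\Delta_{\mathcal{Y}}$ entering the construction lies in $T_xY_t$ by the description of the normal bundle, so $Y_t$ contains the relevant length-$2$ subscheme. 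Finally $f\colon\widetilde{\mathcal{Y}\times_B\mathcal{Y}}\to M$ is injective (again because $T_xY_t\hookrightarrow T_xX$ on the exceptional divisor) and is a morphism between smooth irreducible varieties of the same dimension $\dim B+2n-2$ that is birational onto its image; by Zariski's main theorem it is an open immersion, which proves the lemma.

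\emph{Main obstacle.} The delicate step is the scheme-theoretic identity $q^{-1}(\Delta_X)=\Delta_{\mathcal{Y}}$ — equivalently, that blowing up the relative diagonal, and nothing more, already resolves the rational map from $\mathcal{Y}\times_B\mathcal{Y}$ to $\widetilde{X\times X}$. This is exactly where the smoothness of $\mathcal{Y}\to X$ (global generation of $L$) and the codimension count $\mathrm{codim}\,\Delta_{\mathcal{Y}}=n-1$ are used; the remaining verifications are routine matters of fibrewise descriptions and dimension counts.
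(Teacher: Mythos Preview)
Your argument is correct and follows essentially the same approach as the paper: both identify $M$ as the incidence locus $\{(z,\sigma)\in\widetilde{X\times X}\times|L|:\sigma_{|z}=0\}$, observe that very ampleness makes it a projective bundle, and realise $\widetilde{\mathcal{Y}\times_B\mathcal{Y}}$ as the open subset lying over $B$. The paper does this in one paragraph by invoking the moduli description of $\widetilde{X\times X}$ as ordered length-$2$ subschemes and simply declaring the forgetful map $(b,x_1,x_2,z)\mapsto(x_1,x_2,z)$; you instead justify that map carefully via the universal property of the blow-up (the scheme-theoretic identity $q^{-1}(\Delta_X)=\Delta_{\mathcal{Y}}$) and then invoke Zariski's main theorem, which is more work but also more rigorous. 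One small slip: the fibres of $\mathcal{Y}\to X$ are linear subsystems of $|L|$ through a point, not ``the embeddings $Y_t\hookrightarrow X$'' --- but your local-coordinate computation uses the correct picture (the fibres of $\mathcal{Y}\to B$), so this does not affect the argument.
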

\begin{proof}Indeed, a point in $\widetilde{\mathcal{Y}\times_B\mathcal{Y}}$ is a $4$-uple $(b,x_1,x_2,z)$
consisting of  a point of $B$,  two points $x_1,\,x_2$ in
$\mathcal{Y}_b$, and  a  length $2$ subscheme   $z\subset
\mathcal{Y}_b$ whose associated  cycle is  $x_1+x_2$. There is thus
a morphism $p$ from $\widetilde{\mathcal{Y}\times_B\mathcal{Y}}$ to
$\widetilde{X\times X}$ which parameterizes  triples $(x_1,x_2,z)$
where  $x_1,\,x_2$ are two points  of $X$, and $z\subset X$ is a
subscheme of length $2$  whose associated cycle is $x_1+x_2$. The
  fiber of $p$ over $(x_1,x_2,z)$ is clearly the set
of $b\in B$ such that $\sigma_{b\mid z}=0$. Thus
$\widetilde{\mathcal{Y}\times_B\mathcal{Y}}$ is Zariski open in the
variety
$$M:=\{(\sigma,(x_1,x_2,z)),\,\sigma_{\mid z}=0\}\subset \mathbb{P}( H^0(X,L))\times \widetilde{X\times X}.$$
The very ampleness of $L$ guarantees that $M$ is a projective bundle over
$\widetilde{X\times X}$.
\end{proof}
We now  assume that the main assumption of Theorem \ref{theomain}
holds, namely that there exist for general $b\in B$ a variety $T_b$
of dimension $n-1-2c$ and a correspondence with
$\mathbb{Q}$-coefficients $P_b\in CH^{n-1-c}( T_b\times
\mathcal{Y}_b)_\mathbb{Q}$ of codimension $n-1-c$ (a family of
$c$-cycles in $Y_b$ parameterized by $T_b$) such that
$$P_b^*:H^{n-1}(Y_b,\mathbb{Q})_{prim}\rightarrow H^{n-2c-1}(T_b,\mathbb{Q})$$
is compatible with cup-product up to a coefficient $N\not=0$. We
then have the following result   in the same spirit as Proposition
2.7 in \cite{voisingenhodgebloch}, which is very simple but
nevertheless a key point in the whole argument.
\begin{lemm}\label{newlemma} Under the same assumptions, there exist a
quasi-projective variety $\mathcal{T}\rightarrow B$ and a
codimension $n-1-c$ cycle $\mathcal{P}\in {
CH}^{n-1-c}(\mathcal{T}\times_B\mathcal{Y})_\mathbb{Q}$ such that
for general $b\in B$, the map
$\mathcal{P}^*_b:H^{n-1}(\mathcal{Y}_b,\mathbb{Q})_{prim}\rightarrow
H^{n-2c-1}(\mathcal{T}_b,\mathbb{Q})$ is compatible with cup-product
up to a coefficient $N'\not=0$.
\end{lemm}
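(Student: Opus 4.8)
The plan is to argue by a spreading‑out principle, the essential (and very simple) point being the passage from the pointwise existence of the data $(T_b,P_b)$ to the existence of a single algebraic family. A pair consisting of a smooth projective $(n-1-2c)$-fold $T$ together with a codimension $n-1-c$ cycle $P$ on $T\times Y_b$ with $\mathbb Q$-coefficients is classified --- after embedding $T$ in some $\mathbb P^M$, bounding the degrees of the components of $P$, and clearing denominators --- by a point of a countable disjoint union $\bigsqcup_{k\in\mathbb N}\mathcal H^{(k)}$ of quasi-projective schemes over $B$, built as fibre products over $B$ of relative Hilbert schemes (for $T$) and relative Chow varieties (for the components of $P$); each $\mathcal H^{(k)}$ carries a universal family $\mathcal T^{(k)}\to\mathcal H^{(k)}$ and a universal cycle $\mathcal P^{(k)}\in CH^{n-1-c}\bigl(\mathcal T^{(k)}\times_{\mathcal H^{(k)}}(\mathcal Y\times_B\mathcal H^{(k)})\bigr)_{\mathbb Q}$. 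I would restrict each $\mathcal H^{(k)}$ to the open locus over which $\mathcal T^{(k)}$ is a smooth family of $(n-1-2c)$-folds, so that $\mathcal T^{(k)}\times_{\mathcal H^{(k)}}(\mathcal Y\times_B\mathcal H^{(k)})$ is smooth and proper over $\mathcal H^{(k)}$. Then the class of the universal cycle restricts to a flat section of the corresponding local system over each connected component of $\mathcal H^{(k)}$, and hence so does the induced family of morphisms $P_h^*\colon H^{n-1}(Y_{\pi(h)},\mathbb Q)_{prim}\to H^{n-1-2c}(T_h,\mathbb Q)$. Consequently the condition that the $\mathbb Q$-bilinear form $(\alpha,\beta)\mapsto(P_h^*\alpha,P_h^*\beta)_{T_h}$ be proportional to the intersection form of $Y_{\pi(h)}$, and, when it holds, the (necessarily rational) proportionality factor $N_h$, are locally constant on $\mathcal H^{(k)}$; so the locus $Z^{(k)}\subset\mathcal H^{(k)}$ where Definition \ref{defiparam} a), b) holds with $N\neq 0$ is a union of connected components, on each of which $N$ equals a fixed nonzero rational number.

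By the hypothesis of the lemma, the image in $B$ of $\bigsqcup_k Z^{(k)}$ contains a dense open subset. Since $B$ is irreducible over $\mathbb C$, it is not a countable union of proper closed subsets; hence some irreducible component $\mathcal H_0$ of some $Z^{(k)}$ dominates $B$, and on $\mathcal H_0$ the proportionality factor is a fixed $N_0\in\mathbb Q^\times$. Let $\mathcal T_0\to\mathcal H_0$ and $\mathcal P_0$ be the restrictions of the universal data. Now I would descend to $B$ by a multisection: after shrinking $B$ to a dense open $B_1$, I choose a generically finite morphism $g\colon B'\to B_1$, of degree $\delta$, together with a lift $B'\to\mathcal H_0$ over $B_1$ (take a closed point of the generic fibre of $\mathcal H_0\to B_1$, which is defined over a finite extension $K$ of $\mathbb C(B_1)$, and let $B'$ be a suitable dense open of the normalisation of $B_1$ in $K$). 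Pulling back the universal data along $B'\to\mathcal H_0$ yields a smooth family $p_{B'}\colon\mathcal T_{B'}\to B'$ of $(n-1-2c)$-folds and a cycle $\mathcal P_{B'}\in CH^{n-1-c}\bigl(\mathcal T_{B'}\times_{B'}(\mathcal Y\times_B B')\bigr)_{\mathbb Q}$ whose fibrewise correspondences are cup-product compatible with coefficient $N_0$.

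Finally I would take $\mathcal T:=\mathcal T_{B'}$, now regarded as a variety over $B$ via $\mathcal T_{B'}\xrightarrow{p_{B'}}B'\xrightarrow{g}B_1\subset B$. Under the canonical identification $\mathcal T\times_B\mathcal Y=\mathcal T_{B'}\times_{B'}(\mathcal Y\times_B B')$ (both classify pairs $(t,y)$ with $y$ lying over $g(p_{B'}(t))$), the cycle $\mathcal P_{B'}$ becomes a cycle $\mathcal P\in CH^{n-1-c}(\mathcal T\times_B\mathcal Y)_{\mathbb Q}$, the one we want. For $b\in B_1$ general, with $g^{-1}(b)=\{b'_1,\dots,b'_\delta\}$ general, the fibre $\mathcal T_b$ is the disjoint union of the $\delta$ smooth $(n-1-2c)$-folds $(\mathcal T_{B'})_{b'_i}$, hence is smooth projective of pure dimension $n-1-2c$; one has $\mathcal Y_b=Y_b$ and $\mathcal P_b=\sum_{i}(\mathcal P_{B'})_{b'_i}$, so $\mathcal P_b^*=\bigl((\mathcal P_{B'})_{b'_i}^*\bigr)_i\colon H^{n-1}(Y_b,\mathbb Q)_{prim}\to\bigoplus_i H^{n-1-2c}\bigl((\mathcal T_{B'})_{b'_i},\mathbb Q\bigr)=H^{n-2c-1}(\mathcal T_b,\mathbb Q)$. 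Since the cup-product pairing on a disjoint union is the orthogonal direct sum of the pairings of the pieces, one gets $(\mathcal P_b^*\alpha,\mathcal P_b^*\beta)_{\mathcal T_b}=\sum_{i=1}^\delta N_0(\alpha,\beta)_{Y_b}=\delta N_0\,(\alpha,\beta)_{Y_b}$ for all $\alpha,\beta\in H^{n-1}(Y_b,\mathbb Q)_{prim}$, which is the assertion with $N'=\delta N_0\neq 0$.

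The main obstacle --- and really the whole content --- is the spreading-out of the first two paragraphs, i.e. manufacturing one family out of the pointwise hypothesis by playing the countability of Hilbert and Chow schemes against the uncountability of $\mathbb C$. The remaining step is routine but requires the small observation that one cannot in general find an actual section of $\mathcal H_0\to B$, only a finite multisection $B'\to B$; this is harmless because $\mathcal T_{B'}$, viewed as a family over $B$ with (generically) disconnected fibres, still works, the cup-product compatibility surviving with the coefficient merely scaled by $\delta$ --- and crucially no cancellation can occur, since the proportionality factor is literally the constant $N_0$ along the irreducible parameter variety $\mathcal H_0$.
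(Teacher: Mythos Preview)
Your proof is correct and follows essentially the same approach as the paper's: spread out the pointwise data via Hilbert/Chow schemes to a family over a generically finite cover $B'\to B$, then view this as a family over $B$ with disconnected fibres, so that the cup-product compatibility coefficients add up to $\delta N_0$. The paper's proof is very terse (``The reason is very simple''), and you have filled in precisely the details it elides---in particular, you are more explicit than the paper about why the proportionality factor is constant along an irreducible component of the parameter space (via flatness of cycle classes), a point the paper's notation $N'=NN_0$ glosses over.
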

\begin{proof}[Proof] The reason is very simple: Using our assumption and  a Hilbert schemes or Chow varieties argument,
we can certainly construct data $\mathcal{T}',\,\mathcal{P}'$  as
above over a finite cover $U'$, say of degree $N_0$, of a Zariski
open set $U$ of $B$. We then consider $\mathcal{T}'$ as a family
over $U$ which we denote by $\mathcal{T}_U$, and $\mathcal{P}'$ as a
relative correspondence over $U$ between  $\mathcal{T}_U$ and
$\mathcal{Y}_U$ which we denote by  $\mathcal{P}\in CH^{n-1-c}(
\mathcal{T}_U\times_U\mathcal{Y}_U)_\mathbb{Q}$. For a general point
$u\in U$, the fiber of $\mathcal{T}_U$ over $u$ is the disjoint
union of the fibers $\mathcal{T}'_{u'}$, where $u'\in U'$ maps to
$u$, and the correspondence $\mathcal{P}_u$ is the disjoint union of
the correspondences $\mathcal{P}'_{u'}\in
CH^{n-1-c}(\mathcal{T}'_{u'}\times \mathcal{Y}_u)$, where $u'\in U'$
maps to $u$. Hence
$\mathcal{P}_u^*:H^{n-1}(\mathcal{Y}_u,\mathbb{Q})_{prim}\rightarrow
H^{n-2c-1}(\mathcal{T}_u,\mathbb{Q})$ multiplies the intersection
form by $NN_0$, which proves the lemma with $N'=NN_0$.
\end{proof}
\begin{coro}\label{coronouveau25jan}
 Under the same assumptions, there is a closed algebraic  subset $\mathcal{W}\subset \mathcal{Y}$ of codimension $\geq
c$ and a cycle $\mathcal{Z}\in CH^{n-1}(\mathcal{Y}\times_B
\mathcal{Y})_\mathbb{Q}$ which is supported on $\mathcal{W}\times_B
\mathcal{W}$, such that for any $b\in B$, the restricted cycle
$$\mathcal{Z}_b-\Delta_{\mathcal{Y}_b}$$
is cohomologous in $\mathcal{Y}_b\times \mathcal{Y}_b$ to a cycle
$\Gamma_b$ coming from $X\times X$.
\end{coro}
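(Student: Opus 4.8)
The plan is to spread out the cohomological decomposition of the diagonal provided by Corollary \ref{corouseful29jan} over the base $B$, using the fact that such decompositions, when they hold for the general fiber, can be globalized. First I would apply Lemma \ref{newlemma} to obtain a family $\mathcal{T}\to B$ and a relative correspondence $\mathcal{P}\in CH^{n-1-c}(\mathcal{T}\times_B\mathcal{Y})_\mathbb{Q}$ such that for general $b$ the map $\mathcal{P}_b^*$ on primitive cohomology is compatible with cup-product up to a nonzero coefficient $N'$. Then, following the argument of Lemma \ref{lediagZY} fiberwise but now relatively over $B$, I would form the relative cycle $(\mathcal{P},\mathcal{P})_*(\Delta_{\mathcal{T}/B})\in CH^{n-1}(\mathcal{Y}\times_B\mathcal{Y})_\mathbb{Q}$. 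By construction its support maps into $\mathcal{W}\times_B\mathcal{W}$, where $\mathcal{W}\subset\mathcal{Y}$ is the union of the image of the support of $\mathcal{P}$ (and, when $n-1$ is even, of the finitely many families of half-dimensional cycles $Z_i,\,Z_i'$ appearing in Lemma \ref{lediagZY}, which can also be spread out over $B$ after shrinking $B$ and passing to a further cover, then pushed back as in Lemma \ref{newlemma}); this $\mathcal{W}$ has codimension $\geq c$ in $\mathcal{Y}$ because $c\leq (n-1)/2$. Setting $\mathcal{Z}:=\tfrac{1}{N'}\big((\mathcal{P},\mathcal{P})_*(\Delta_{\mathcal{T}/B})-[\Gamma_1\text{-part}]\big)$ gives a cycle supported on $\mathcal{W}\times_B\mathcal{W}$ with $\mathbb{Q}$-coefficients.

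Next I would verify the cohomological conclusion fiber by fiber. For a \emph{general} $b\in B$, Corollary \ref{corouseful29jan} applied to $Y_b$ with the data $\mathcal{T}_b,\,\mathcal{P}_b$ gives exactly $[\mathcal{Z}_b]-[\Delta_{\mathcal{Y}_b}]=[\Gamma_b]$ for some $\Gamma_b$ restricted from $X\times X$, so the statement holds over a dense Zariski open $B^\circ\subset B$. To propagate this to \emph{all} $b\in B$, I would use Lemma \ref{leselfprod}: the total space $\widetilde{\mathcal{Y}\times_B\mathcal{Y}}$ is Zariski-open in a projective bundle $M$ over $\widetilde{X\times X}$, and on such a space the restriction map on $H^{2n-2}$ from the ambient projective bundle is surjective (Leray--Hirsch), while $H^{2n-2}(\widetilde{X\times X},\mathbb{Q})$ is entirely algebraic since $X$, hence $\widetilde{X\times X}$, has trivial Chow groups. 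Therefore any class on a fiber $\mathcal{Y}_b\times\mathcal{Y}_b$ that is the restriction of a global cohomology class on $\widetilde{\mathcal{Y}\times_B\mathcal{Y}}$ which itself comes from $\widetilde{X\times X}$ is, fiberwise, a restriction of an algebraic class from $X\times X$; and the difference $[\mathcal{Z}]-[\Delta_{\mathcal{Y}/B}]$, being a global class on $\mathcal{Y}\times_B\mathcal{Y}$ which vanishes on the primitive cohomology of the general fiber, extends/lifts to a class on $\widetilde{X\times X}$ once one knows it does so on $B^\circ$ — the point being that a global section of the relevant local system (or a class in $H^{2n-2}$ of the total space) is determined by its value at one general point.

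The step I expect to be the main obstacle is precisely this last globalization: showing that the fiberwise equality $[\mathcal{Z}_b]-[\Delta_{\mathcal{Y}_b}]=[\Gamma_b]$ valid for general $b$ actually forces, for \emph{every} $b\in B$, that $[\mathcal{Z}_b]-[\Delta_{\mathcal{Y}_b}]$ lies in the (algebraic, $X\times X$-restricted) subspace. The subtlety is that $H^{2n-2}(\mathcal{Y}_b\times\mathcal{Y}_b,\mathbb{Q})$ jumps as $b$ varies and the naive local-system/monodromy argument applies only to the locally constant part; one must instead argue at the level of the total space $\widetilde{\mathcal{Y}\times_B\mathcal{Y}}\subset M$, where the class $[\mathcal{Z}]-[\Delta_{\mathcal{Y}/B}]-[\Gamma]$ is a single well-defined algebraic cycle class whose restriction to the general fiber vanishes on primitive cohomology, hence — using that $M$ is a projective bundle over $\widetilde{X\times X}$ with only algebraic cohomology — whose restriction to \emph{every} fiber is the restriction of a class coming from $X\times X$. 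I would phrase this carefully using the surjectivity of $H^*(M,\mathbb{Q})\to H^*(\widetilde{\mathcal{Y}\times_B\mathcal{Y}},\mathbb{Q})$ and the fact that cycle classes on $M$ restrict compatibly to all fibers, so that $\Gamma$ itself is defined as a relative cycle coming from $X\times X$ and $\Gamma_b$ is simply its fiber.
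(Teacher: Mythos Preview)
Your construction of $\mathcal{W}$ and $\mathcal{Z}$ in the first paragraph is exactly what the paper does: apply Lemma \ref{newlemma}, form $\frac{1}{N'}(\mathcal{P},\mathcal{P})_*(\Delta_{\mathcal{T}/B})$, and in the even-dimensional case spread the half-dimensional cycles $Z_{i,b},Z'_{i,b}$ over a generically finite cover $B'\to B$ and push back down. So far the two proofs coincide.

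Where you diverge is in the passage from ``general $b$'' to ``all $b\in B$''. You flag this as the main obstacle, claim that $H^{2n-2}(\mathcal{Y}_b\times\mathcal{Y}_b,\mathbb{Q})$ jumps with $b$, and propose to invoke the projective-bundle structure of Lemma \ref{leselfprod} to resolve it. This concern is misplaced. By definition $B\subset|L|$ is the open set of \emph{smooth} hypersurfaces, so the family $(f,f):\mathcal{Y}\times_B\mathcal{Y}\to B$ is smooth and proper, and the sheaf $R^{2n-2}(f,f)_*\mathbb{Q}$ is an honest local system on $B$ --- there is no jumping. The class $[\mathcal{Z}_b]-[\Delta_{\mathcal{Y}_b}]$ is the fibre at $b$ of a flat section (the restriction of the global class of $\mathcal{Z}-\Delta_{\mathcal{Y}/B}$), and the image of $H^{*}(X\times X,\mathbb{Q})$ under restriction, being spanned by restrictions of global classes via the natural map $\mathcal{Y}\times_B\mathcal{Y}\to X\times X$, is a sub-local-system. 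Membership of a flat section in a sub-local-system at a general point of the connected base $B$ forces membership everywhere. The paper regards this step as evident and does not comment on it.

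Your proposed route through Lemma \ref{leselfprod} is both unnecessary here and, as sketched, incomplete: on $M$ the class of $\mathcal{Z}-\Delta_{\mathcal{Y}/B}$ need not lie in $p^*CH(\widetilde{X\times X})_\mathbb{Q}$, since a priori a $\delta^{n-1}$-term (contributing a multiple of the diagonal on each fibre) could appear. That analysis is precisely the content of Proposition \ref{lecrux}, which the paper reserves for the genuinely hard step of upgrading the cohomological decomposition to one in Chow; invoking it already at the level of Corollary \ref{coronouveau25jan} is premature.
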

\begin{proof} With notation as in Lemma \ref{newlemma}, we first define $\mathcal{W}_0\subset\mathcal{Y}$ as the
image of the support of $\mathcal{P}$ under the second projection.
Then we define $\mathcal{Z}_0$ as
$\frac{1}{N'}(\mathcal{P},\mathcal{P})_*(\Delta_{\mathcal{T}/B})$,
where   $(\mathcal{P},\mathcal{P})\in
CH^{2n-2}(\mathcal{T}\times_B\mathcal{T}\times_B\mathcal{Y}\times_B\mathcal{Y})_\mathbb{Q}$
denotes the relative correspondence $pr_{13}^*\mathcal{P}\cdot
pr_{24}^*\mathcal{P}$ between $\mathcal{T}\times_B\mathcal{T}$ and
$\mathcal{Y}\times_B\mathcal{Y}$, with
$$pr_{13},\, pr_{24}: \mathcal{T}\times_B\mathcal{T}\times_B\mathcal{Y}\times_B\mathcal{Y}
\rightarrow \mathcal{T}\times_B\mathcal{Y}$$ the two natural
projections. If $n-1$ is odd, the conclusion then follows directly
from Lemma \ref{lediagZY}, with $\mathcal{Z}=\mathcal{Z}_0$,
$\mathcal{W}=\mathcal{W}_0$.

When $n-1$ is even, we argue as in the proof of Corollary
\ref{corouseful29jan}, which says that for any $b\in B$, there exist
cycles $Z_{i,b},\,Z'_{i,b},\,i\geq1,$ of dimension $\frac{n-1}{2}$
in $\mathcal{Y}_b$, a cycle $\Gamma_b$ in $\mathcal{Y}_b\times
\mathcal{Y}_b$ which is the restriction of a cycle in $X\times X$,
and rational numbers $\alpha_i$ such that
$\mathcal{Z}_{0,b}-\Delta_{\mathcal{Y}_b}-\Gamma_b$ is cohomologous
in $\mathcal{Y}_b\times \mathcal{Y}_b$ to
$\sum_i\alpha_iZ_{i,b}\times Z'_{i,b}$. The cycles
$Z_{i,b},\,Z'_{i,b},\,i\geq1,$ can be defined over a generically
finite cover $B'\rightarrow B$, giving families
$$\mathcal{Z}_i\subset \mathcal{Y}',\,\,\mathcal{Z}'_i\subset
\mathcal{Y}'$$ with $\mathcal{Y}':=\mathcal{Y}\times_BB'$. Then,
over $B'$, we have the cycle   $\mathcal{Z}'_0\in
CH^{n-1}(\mathcal{Y}'\times_{B'}\mathcal{Y}')_\mathbb{Q}$ defined as
the pull-back of $\mathcal{Z}_0$, such that for any $b\in B'$,
$$[\mathcal{Z}'_{0,b}]-[\Delta_{\mathcal{Y}'_b}-\Gamma_b]=\sum_{i\geq1}\alpha_i[\mathcal{Z}_{i,b}\times_{B'} \mathcal{Z}'_{i,b}].$$
 Denote $\phi:\mathcal{Y}'\rightarrow \mathcal{Y}$, $(\phi,\phi):\mathcal{Y}'\times_{B'}\mathcal{Y}'
 \rightarrow \mathcal{Y}\times_{B}\mathcal{Y}$ the natural
 morphisms,
$$\mathcal{W}:=\mathcal{W}_0\cup\phi(\cup_i
{\rm Supp}\,\mathcal{Z}_i)\cup\phi(\cup_i{\rm
Supp}\,\mathcal{Z}'_i),$$ and
$$\mathcal{Z}=\mathcal{Z}_0-\frac{1}{{\rm
deg}\,\phi}\,\,(\phi,\phi)_*(\sum_i\alpha_i\mathcal{Z}_{i}\times_{B'}\mathcal{Z}'_{i}).$$
Then $\mathcal{W}$ and $\mathcal{Z}$ satisfy the desired conclusion.

\end{proof}
\begin{proof}[Proof of Theorem \ref{theomain}]
Recall the Zariski open inclusion
$$\widetilde{\mathcal{Y}\times_B\mathcal{Y}}\subset M$$
of Lemma \ref{leselfprod}, where $p:M\rightarrow \widetilde{X\times
X}$ is a projective bundle over $\widetilde{X\times X}$. In both
cases, the ``$\widetilde{\,\,\,\,\,}$'' means that we blow-up along
the diagonal.

By Corollary \ref{coronouveau25jan},
our assumptions  give a
subvariety $\mathcal{W}\subset \mathcal{Y}$ of codimension $\geq c$
and a cycle $\mathcal{Z}\in CH^{n-1}(\mathcal{Y}\times_B
\mathcal{Y})_\mathbb{Q}$ which is supported on $\mathcal{W}\times_B
\mathcal{W}$, such that for any $b\in B$, the cycle
$$\mathcal{Z}_b-\Delta_{\mathcal{Y}_b}$$
is cohomologous in $\mathcal{Y}_b\times \mathcal{Y}_b$ to a cycle
$\Gamma_b$ coming from $X\times X$. Note that we can clearly assume
that $\Gamma_b$ is the restriction to $\mathcal{Y}_b\times
\mathcal{Y}_b$ of a cycle $\Gamma'$ of $X\times X$, which is
independent of $b$, since we are interested only in its cohomology
class:
$$[\Gamma_b]=[\Gamma'_{\mid \mathcal{Y}_b\times \mathcal{Y}_b}]\,\,{\rm in}\,\,H^{2n-2}(\mathcal{Y}_b\times \mathcal{Y}_b,\mathbb{Q}).$$

In other words, the cycle
$$\mathcal{Z}-\Delta_{\mathcal{Y}/B}-p_0^*(\Gamma')
\in CH^{n-1}(\mathcal{Y}\times_B\mathcal{Y})_\mathbb{Q},$$
where $p_0:\mathcal{Y}\times_B\mathcal{Y}\rightarrow X\times X$ is the natural map,
is cohomologous to $0$ along the fibers
of $\mathcal{Y}\times_B\mathcal{Y}\rightarrow B$.

We  now blow-up the relative diagonal, pull-back these cycles to
$\widetilde{\mathcal{Y}\times_B\mathcal{Y}}$  and extend them to $M$.
This provides us with a cycle
\begin{eqnarray}\label{T}
R:=\overline{\widetilde{\mathcal{Z}}}-\overline{\widetilde{\Delta_{\mathcal{Y}/B}}}-
p^*(\Gamma') \in CH^{n-1}(M)_\mathbb{Q},
\end{eqnarray}
which has the property that its restriction to
$\widetilde{\mathcal{Y}_b\times \mathcal{Y}_b}\subset M$ is
cohomologous to $0$, for any $b\in B$. We prove now:
\begin{prop} \label{lecrux} There exists a cycle
$\gamma\in CH^{n-1}(X\times X)_\mathbb{Q}$ such that for any $b\in
B$, $R-p^*\gamma$ maps to $0$ in $CH^{n-1}(\mathcal{Y}_b\times
\mathcal{Y}_b)_\mathbb{Q}$ via the map $\tau_{b*}\circ i_b^*$, where
$\tau_b:\widetilde{\mathcal{Y}_b\times \mathcal{Y}_b}\rightarrow
\mathcal{Y}_b\times \mathcal{Y}_b$ is the blow-up of the diagonal
and $i_b: \widetilde{\mathcal{Y}_b\times \mathcal{Y}_b} \rightarrow
M$ is the inclusion map.

\end{prop}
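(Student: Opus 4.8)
The plan is to exploit the fact, encoded in the cycle $R$ of (\ref{T}), that $R$ lives on the projective bundle $M$ over $\widetilde{X\times X}$ and restricts to a cohomologically trivial cycle on every fiber $\widetilde{\mathcal{Y}_b\times\mathcal{Y}_b}$ of the family over $B$. First I would recall the projective bundle formula: since $p:M\to\widetilde{X\times X}$ is a projective bundle, $CH^{n-1}(M)_\mathbb{Q}$ is a free $CH^*(\widetilde{X\times X})_\mathbb{Q}$-module generated by powers $h^j$ of the relative hyperplane class $h=c_1(\mathcal{O}_M(1))$, so we may write
$$R=\sum_{j=0}^{N}p^*(\gamma_j)\cdot h^j\,\,{\rm in}\,\,CH^{n-1}(M)_\mathbb{Q},\qquad \gamma_j\in CH^{n-1-j}(\widetilde{X\times X})_\mathbb{Q}.$$
Because $X$ has trivial Chow groups, so does $X\times X$, and hence so does the blow-up $\widetilde{X\times X}$ (blowing up a smooth subvariety with trivial Chow groups in a variety with trivial Chow groups preserves the property, via the blow-up formula for Chow groups; alternatively this is recorded in \cite{voisingenhodgebloch}). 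Therefore the cycle class map is injective on $CH(\widetilde{X\times X})_\mathbb{Q}$, and each $\gamma_j$ is determined up to rational equivalence by its cohomology class.

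The next step is to pin down the $\gamma_j$ from the cohomological hypothesis. Restricting the module decomposition to a fiber $\widetilde{\mathcal{Y}_b\times\mathcal{Y}_b}=M_b$ and passing to cohomology, the hypothesis $[R_{|M_b}]=0$ together with the (relative) projective bundle formula in cohomology forces each $[\gamma_j]$, restricted to $\widetilde{\mathcal{Y}_b\times\mathcal{Y}_b}$ — more precisely restricted through the map $\widetilde{\mathcal{Y}_b\times\mathcal{Y}_b}\to\widetilde{X\times X}$, which for general $b$ is dominant with image all of $\widetilde{X\times X}$ up to the relevant codimension — to vanish. Since we are free to spread out over $B$ and the general fiber $\widetilde{\mathcal{Y}_b\times\mathcal{Y}_b}\subset M$ dominates $\widetilde{X\times X}$, a Lefschetz-type / weak-Lefschetz argument (the inclusion $\widetilde{\mathcal{Y}_b\times\mathcal{Y}_b}\hookrightarrow\widetilde{X\times X}$ restricted from an ample-type divisor, or rather a chain of two hyperplane-section-type maps) shows that $[\gamma_j]$ maps to zero in $H^*(\widetilde{X\times X},\mathbb{Q})$ in the relevant degrees, hence $\gamma_j=0$ in $CH(\widetilde{X\times X})_\mathbb{Q}$ for $j\geq 1$, leaving only $j=0$: namely $R=p^*(\gamma_0)$ cohomologically on each fiber. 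This is not literally enough — we need an actual rational equivalence after applying $\tau_{b*}i_b^*$ — but it identifies the candidate: set $\gamma\in CH^{n-1}(X\times X)_\mathbb{Q}$ to be (the push-forward to $X\times X$ of) $\gamma_0$, or rather the unique cycle on $X\times X$ whose pull-back to $\widetilde{X\times X}$ agrees with $\gamma_0$ modulo exceptional-divisor terms that die under $\tau_{b*}i_b^*$.

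Finally, I would carry out the actual cancellation. After subtracting $p^*\gamma$, the remaining cycle $R-p^*\gamma$ is supported, up to rational equivalence, in terms that involve only the relative hyperplane class $h$ with coefficients that are cohomologically — hence, by triviality of Chow groups on $\widetilde{X\times X}$, rationally — trivial after restriction; and the exceptional-divisor contributions (the blow-up of the diagonal) are pushed forward to zero by $\tau_{b*}$ for dimension reasons, since the exceptional divisor of $\widetilde{\mathcal{Y}_b\times\mathcal{Y}_b}\to\mathcal{Y}_b\times\mathcal{Y}_b$ is a $\mathbb{P}^{n-2}$-bundle over the diagonal $\mathcal{Y}_b\cong\mathcal{Y}_b$ of dimension $n-1$, and a cycle of codimension $n-1$ supported there pushes to a cycle of codimension $\geq n-1$ on $\mathcal{Y}_b\times\mathcal{Y}_b$, controlled so as to vanish. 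The main obstacle is the transition from cohomological triviality on fibers to the stated vanishing in $CH^{n-1}(\mathcal{Y}_b\times\mathcal{Y}_b)_\mathbb{Q}$ \emph{for every} $b\in B$, not just the general one: this requires that once we know $R-p^*\gamma$ is "fiberwise cohomologically trivial in a way that comes from $\widetilde{X\times X}$", its image under $\tau_{b*}i_b^*$ is actually rationally trivial, which is exactly where the projective-bundle structure of $M$ over a variety with trivial Chow groups is used in full strength — the coefficients $\gamma_j$ being genuine rational-equivalence classes on $\widetilde{X\times X}$, their vanishing in cohomology is their vanishing in $CH$, and the restriction/push-forward is then automatically zero on \emph{all} fibers simultaneously. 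I expect the delicate bookkeeping to be the handling of the exceptional divisor and the precise choice of $\gamma$ so that the $j\ge 1$ terms and the blow-up terms are absorbed uniformly in $b$.
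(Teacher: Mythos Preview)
Your proposal has the right starting point — the projective bundle decomposition of $CH^{n-1}(M)_\mathbb{Q}$ — but it contains a genuine gap in the handling of the exceptional divisor, and it overcomplicates the handling of $h$.

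First, the terms involving $h^j$ for $j\geq 1$: you propose a Lefschetz-type argument to kill the coefficients $\gamma_j$, but this is unnecessary and does not work as stated (the fiber $\widetilde{\mathcal{Y}_b\times\mathcal{Y}_b}$ is not a projective bundle over anything, so there is no ``relative projective bundle formula on the fiber''). The correct observation, made in the paper, is simply that $h$ is pulled back from $|L|$ via the first projection $M\to |L|$, so $i_b^*h=0$ for every $b$. Thus $i_b^*(h^j\cdot p^*\gamma_j)=0$ automatically for $j\geq 1$, with no condition on $\gamma_j$.

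The serious gap is your treatment of the blow-up contribution. You write the coefficients $\gamma_j$ as classes on $\widetilde{X\times X}$ and then assert that the exceptional-divisor parts ``are pushed forward to zero by $\tau_{b*}$ for dimension reasons''. This is false. Decomposing further via the blow-up formula, $CH(\widetilde{X\times X})$ is generated over $CH(X\times X)$ by powers of $\delta$, so one writes $R=\sum_{r,s}h^r\delta^s p^*(\gamma_{r,s})$. After $i_b^*$ kills the $h$-terms, one has $\tau_{b*}(\delta_b^k)=0$ for $0<k<n-1$, but $\tau_{b*}(\delta_b^{n-1})=(-1)^{n-2}\Delta_{\mathcal{Y}_b}$ is \emph{not} zero: it is exactly the diagonal. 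Hence
\[
\tau_{b*}\circ i_b^*(R)=(-1)^{n-2}\gamma_{0,n-1}\,\Delta_{\mathcal{Y}_b}+{\gamma_{0,0}}_{\mid \mathcal{Y}_b\times\mathcal{Y}_b},
\]
with $\gamma_{0,n-1}\in CH^0(X\times X)=\mathbb{Z}$ an integer. To kill this diagonal term one must use the hypothesis — which you never invoke — that $H^{n-1}(\mathcal{Y}_b,\mathbb{Q})_{prim}\neq 0$: passing to cohomology and using that $\tau_{b*}i_b^*(R)$ is cohomologous to $0$, the class $[\Delta_{\mathcal{Y}_b}]$ is not the restriction of any class from $X\times X$ (it acts nontrivially on primitive cohomology), forcing $\gamma_{0,n-1}=0$. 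Only then can one set $\gamma=\gamma_{0,0}$. Your argument as written would, if correct, never need the nonvanishing of primitive cohomology, which is a clear sign that something is missing.
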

Admitting the proposition temporarily, the proof of Theorem
\ref{theomain} concludes as follows: For any $b\in B$, the image
$\tau_{b*}\circ i_b^*(R-p^*\gamma)\in CH^{n-1}(\mathcal{Y}_b\times
\mathcal{Y}_b)_\mathbb{Q}$ is by construction the cycle
\begin{eqnarray}
\label{formcycle} \mathcal{Z}_b-\Delta_{\mathcal{Y}_b}-\Gamma'_{\mid
\mathcal{Y}_b\times \mathcal{Y}_b}-\gamma_{\mid \mathcal{Y}_b\times
\mathcal{Y}_b}\in CH^{n-1}(\mathcal{Y}_b\times
\mathcal{Y}_b)_\mathbb{Q}.
\end{eqnarray}
Proposition  \ref{lecrux} says that the cycle (\ref{formcycle})
vanishes in $CH^{n-1}(\mathcal{Y}_b\times
\mathcal{Y}_b)_\mathbb{Q}$, which can be rewritten as:
\begin{eqnarray}\label{autreformcycle} \Delta_{\mathcal{Y}_b}=
\mathcal{Z}_b+\gamma'_{\mid \mathcal{Y}_b\times \mathcal{Y}_b}\,\,{\rm in}\,\,
CH^{n-1}(\mathcal{Y}_b\times \mathcal{Y}_b)_\mathbb{Q},
\end{eqnarray}
for a cycle $\gamma'\in CH^{n-1}(X\times X)_\mathbb{Q}$. Recalling
that for general $b\in B$, $\mathcal{Z}_b$ is supported on
$\mathcal{W}_b\times \mathcal{W}_b$ with $\mathcal{W}_b\subset
\mathcal{Y}_b$ closed algebraic of codimension $\geq c$, this
implies by Lemma \ref{introle} that the cycle class map is injective
on $CH_i(\mathcal{Y}_b)_\mathbb{Q}$ for general $b\in B$ and $i\leq
c-1$.

To conclude that this holds also for any $b\in B$, we can  observe
that (\ref{autreformcycle}) holds for any $b\in B$ and it is still
true for any $b\in B$ that $\mathcal{Z}_b$ is rationally equivalent
to a cycle supported on $\mathcal{W}'_b\times \mathcal{W}'_b$ with
$\mathcal{W}'_b\subset \mathcal{Y}_b$ closed algebraic of
codimension $\geq c$, even if $\mathcal{W}_b$ itself is not of
codimension $\geq c$.

\end{proof}
\begin{proof}[Proof of Proposition \ref{lecrux}] Let $\delta\in CH^1(M)$ be the class of the pull-back to
$M$ of the  exceptional divisor of $\widetilde{X\times X}$ and let
$h=c_1(\mathcal{O}_M(1))\in CH^1(M)$, where $\mathcal{O}_M(1)$
refers to the projective bundle structure of $M$ over
$\widetilde{X\times X}$. Note that $M\subset |L|\times
\widetilde{X\times X}$, where the first projection restricts on
$\mathcal{Y}\times_B\mathcal{Y}$ to the natural map to $B$. Thus $h$
is the inverse image of a line bundle on $|L|$ by the first
projection $M\rightarrow |L|$ and it restricts to $0$ in
$CH^1(\widetilde{\mathcal{Y}_b\times \mathcal{Y}_b})$. The class
$\delta$ restricts to the class $\delta_b$ of the exceptional
divisor of $\widetilde{\mathcal{Y}_b\times \mathcal{Y}_b}$. Finally,
note that
$$\tau_{b*}(\delta_b^{k})=0\,\,{\rm in}\,\,CH(\mathcal{Y}_b\times
\mathcal{Y}_b)_\mathbb{Q}\,\,{\rm for} \,\,0<k<n-1,$$
$$\tau_{b*}(\delta_b^{n-1})=(-1)^{n-2}\Delta_{\mathcal{Y}_b}\,\,{\rm
in}\,\,CH^{n-1}(\mathcal{Y}_b\times \mathcal{Y}_b)_\mathbb{Q}.$$

The projective bundle formula tells us that $CH(M)$ is generated by
the powers of $h$ as a module over the ring $CH(\widetilde{X\times
X})$. Next, as the diagonal restriction map $CH(X\times
X)\rightarrow CH(X)$ is surjective, the blow-up formula tells us
that $CH(\widetilde{X\times X})$ is generated over the ring
$CH(X\times X)$ by the powers of $\delta$.

It follows that codimension $n-1$ cycles on $M$ can be written in
the form $$z=\sum_{r,s}h^r\delta^s p^*(\gamma_{r,s}),$$ where $r+s
\leq n-1$ and $\gamma_{r,s} \in CH^{n-1-r-s}(X\times X)$. By the
above arguments, we get
$$\tau_{b*}\circ
i_b^*(z)=\gamma_{0,n-1}(-1)^{n-2}\Delta_{\mathcal{Y}_b}+{\gamma_{0,0}}_{\mid
\mathcal{Y}_b\times \mathcal{Y}_b}\,\,{\rm
in}\,\,CH^{n-1}(\mathcal{Y}_b\times \mathcal{Y}_b)_\mathbb{Q},$$
where $\gamma_{0,n-1}\in CH^{0}(X\times X)=\mathbb{Z}$ is just a
number.

We apply this analysis to the cycle $R$ of (\ref{T}), whose image in
$CH^{n-1}(\mathcal{Y}_b\times \mathcal{Y}_b)_\mathbb{Q}$ is by
construction cohomologous to $0$. Writing as above
$R=\sum_{r,s}h^r\delta^s p^*(\gamma_{r,s})$,
 this  gives  us an equality
\begin{eqnarray}\label{eqcyclescohzero}
\tau_{b*}\circ
i_b^*(R)=\gamma_{0,n-1}(-1)^{n-2}\Delta_{\mathcal{Y}_b}+{\gamma_{0,0}}_{\mid
\mathcal{Y}_b\times \mathcal{Y}_b}\,\,{\rm
in}\,\,CH^{n-1}(\mathcal{Y}_b\times \mathcal{Y}_b)_\mathbb{Q}
\end{eqnarray}
  and in particular an equality of cycle classes:
\begin{eqnarray}\label{cohzero}
 \gamma_{0,n-1}(-1)^{n-2}[\Delta_{\mathcal{Y}_b}]+[{\gamma_{0,0}}_{\mid \mathcal{Y}_b\times \mathcal{Y}_b}]=0\,\,{\rm in}
\,\,H^{2n-2}(\mathcal{Y}_b\times \mathcal{Y}_b,\mathbb{Q}).
\end{eqnarray}
Using our hypothesis that the primitive cohomology of
$\mathcal{Y}_b$ is nonzero, (\ref{cohzero}) implies that
$\gamma_{0,n-1}=0$. Thus
the image of $R$ in
$CH^{n-1}(\mathcal{Y}_b\times \mathcal{Y}_b)_\mathbb{Q}$ is equal to
${\gamma_{0,0}}_{\mid \mathcal{Y}_b\times \mathcal{Y}_b}$. This
proves the lemma, with $\gamma=\gamma_{0,0}$.

\end{proof}
\section{An application \label{secap}}
Let us  give one new application: In \cite{devoi}, Debarre and the
author studied smooth members $Y$ of $|\mathcal{L}|$, where
$\mathcal{L}$ is the Pl\"ucker polarization on the Grassmannian
$G(3,10)$. More precisely, let $V_{10}$ be a $10$-dimensional
complex vector space. To a smooth hypersurface $Y\subset
G(3,V_{10})$ defined by an element $\sigma$ of
$\bigwedge^3V_{10}^*=H^0(G(3,V_{10}),\mathcal{L})$, we associated
the subvariety  $F(Y) $ of the Grassmannian $G(6,V_{10})$ of
$6$-dimensional vector subspaces of $V_{10}$,
 defined
by
$$F(Y):=\{[W]\in G(6,V_{10}), \,W\subset V_{10},\,\sigma_{\mid W}=0\}.$$
We proved in \cite{devoi} that for general $\sigma$, $F(Y)$ is a
smooth hyper-K\"{a}hler $4$-fold. There is a natural correspondence
$P\subset F(Y)\times Y$ defined by
$$P=\{([W],[W'])\in F(Y)\times Y,\,W'\subset W\}.$$
By the first projection
$P\rightarrow F(Y)$, $P$ is a bundle over $F(Y)$ into Grassmannians
$G(3,6)$.

The following result  is proved in \cite{devoi}:
\begin{theo} The map $P^*: H^{20}(Y,\mathbb{Q})_{prim}\rightarrow H^2(F(Y),\mathbb{Q})$
is injective with image equal to $H^2(F(Y),\mathbb{Q})_{prim}$,
(where ``prim'' refers now to the Pl\"ucker polarization).
Furthermore, $h^{11,9}(Y)\not=0$,  the number of moduli of $F(Y)$ is
$20$, and this is equal to $h^{1,1}(F(Y))-1$.
\end{theo}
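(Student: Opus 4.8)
The plan is to prove the four assertions in the order: the hyper-K\"{a}hler structure of $F(Y)$ together with $h^{1,1}(F(Y))=21$; the nonvanishing $h^{11,9}(Y)\neq0$; the statement on $P^*$; and the moduli count. Set $V=V_{10}$ and let $\mathcal{U}\subset V\otimes\mathcal{O}_{G(6,V)}$ be the tautological rank-$6$ subbundle, so that a trivector $\sigma\in\bigwedge^3V^*$ gives a section $s_\sigma$ of the rank-$20$ bundle $\bigwedge^3\mathcal{U}^*$ with zero locus $F(Y)$. For general $\sigma$ I would first check transversality of $s_\sigma$ by a tangent-space computation exploiting the $GL(V)$-action, so that $F(Y)$ is smooth of the expected dimension $24-20=4$; since $K_{G(6,V)}=\mathcal{O}(-10)$ and $\det\bigwedge^3\mathcal{U}^*=(\det\mathcal{U}^*)^{\binom{5}{2}}=\mathcal{O}(10)$, adjunction gives $K_{F(Y)}=\mathcal{O}_{F(Y)}$. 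I would then compute the Hodge numbers of $F(Y)$ from the Koszul resolution $0\to\bigwedge^{20}(\bigwedge^3\mathcal{U})\to\cdots\to\bigwedge^3\mathcal{U}\to\mathcal{O}_{G(6,V)}\to\mathcal{O}_{F(Y)}\to0$, the conormal sequence, and Borel--Weil--Bott on $G(6,V)$, expecting $h^{1,0}=h^{3,0}=0$, $h^{2,0}=h^{4,0}=1$ and $h^{1,1}=21$, hence $\chi(\mathcal{O}_{F(Y)})=3$ and $b_2(F(Y))=23$. With $K_{F(Y)}=\mathcal{O}$, $h^{1,0}=0$ and $\chi(\mathcal{O})=3$, the Beauville--Bogomolov decomposition forces $F(Y)$ to be irreducible holomorphic symplectic (a product of two $K3$ surfaces has $h^{2,0}=2$, a $4$-torus has $h^{1,0}=4$, a strict Calabi--Yau has $h^{2,0}=0$, and $\chi(\mathcal{O})=3$ rules out a nontrivial étale cover), and $b_2=23$ makes it of $K3^{[2]}$-deformation type; in particular $h^{1,1}(F(Y))-1=20$.

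Running the analogous computation on $G(3,V)$ — equivalently Griffiths' residue description of $H^{20}(Y)_{prim}$ — I expect $h^{p,q}(Y)_{prim}=0$ for $p+q=20$ except $h^{11,9}(Y)=h^{9,11}(Y)=1$ and $h^{10,10}(Y)=20$. This gives $h^{11,9}(Y)\neq0$ and shows $\dim H^{20}(Y)_{prim}=22=\dim H^2(F(Y))_{prim}$, with $H^{20}(Y)_{prim}$ of Hodge coniveau $9$.

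For the assertion on $P^*$, note that $[P]\in H^{22}(F(Y)\times Y)$ is algebraic, so $P^*$ is a morphism of Hodge structures $H^{20}(Y)_{prim}\to H^2(F(Y))$ of bidegree $(-9,-9)$; writing $q\colon P\to F(Y)$ for the $G(3,6)$-bundle projection and $p\colon P\to Y$ for the other one, $P^*=q_*\circ p^*$. The crucial point is that $P^*$ is nonzero, which I would prove as in Beauville--Donagi's analysis of the Fano variety of lines on a cubic fourfold: the holomorphic $2$-form on $F(Y)$ is built directly from the trivector $\sigma$, and a Griffiths-residue computation along the fibres of $q$ identifies it with $P^*$ of a generator of $H^{11,9}(Y)_{prim}$, so $P^*$ is already nonzero on the line $H^{11,9}(Y)_{prim}$. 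Since the monodromy of the local system $H^{20}(-,\mathbb{Q})_{prim}$ over the parameter space of the $Y$'s is irreducible (Lefschetz pencil theory, as $\mathcal{L}$ is very ample), $\ker P^*$ — a sub-Hodge structure, hence a monodromy-invariant subspace with vanishing $(11,9)$-part — must be zero, so $P^*$ is injective. Its image is then a rank-$22$ sub-Hodge structure of the rank-$23$ space $H^2(F(Y))$, so its orthogonal complement for the Beauville--Bogomolov form is a rank-$1$ weight-$2$, hence Tate, Hodge structure, which for very general $\sigma$ (when $F(Y)$ has Picard number $1$) must be spanned by the polarization class $h_F$; thus $\mathrm{im}\,P^*=h_F^{\perp}=H^2(F(Y))_{prim}$ for very general $\sigma$, and then for all general $\sigma$ since $\mathrm{im}\,P^*$ and $H^2(-,\mathbb{Q})_{prim}$ are sub-local-systems of the same rank agreeing at a very general point.

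Finally, $Y$ is determined by $\sigma$ up to $\mathrm{Aut}(G(3,V),\mathcal{L})=PGL(V)$, so the number of moduli of $Y$ is $\dim\mathbb{P}(\bigwedge^3V^*)-\dim PGL(V)=119-99=20$, using the classical fact that a general trivector in dimension $10$ has finite stabilizer in $GL(V)$. To get that $F(Y)$ has $20$ moduli I would show $\sigma\mapsto(F(Y),h_F)$ is generically finite onto its image in the moduli of polarized $K3^{[2]}$-type fourfolds; by the Verbitsky--Markman global Torelli theorem this is equivalent to generic finiteness of the period map to the $20$-dimensional polarized period domain, hence — dimensions matching — to injectivity at a general point of the differential of that period map, i.e., by the local Torelli theorem for hyper-K\"{a}hler manifolds, to injectivity of the Kodaira--Spencer map $\bigwedge^3V^*/(\mathfrak{gl}(V)\cdot\sigma)\to H^1(F(Y),\Omega^1_{F(Y)})_{prim}$. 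I would identify this map, through the correspondence $P$, with the Griffiths infinitesimal variation of Hodge structure map $\bigwedge^3V^*/(\mathfrak{gl}(V)\cdot\sigma)\to\mathrm{Hom}(H^{11,9}(Y)_{prim},H^{10,10}(Y)_{prim})$ — the isomorphism $P^*$ just obtained intertwines the two infinitesimal variations, both families being parameterized by $\sigma$ — and prove its injectivity by a Macaulay-type computation in the Jacobian ring of $\sigma$. This yields the number of moduli of $F(Y)$ equal to $20=h^{1,1}(F(Y))-1$. The two Borel--Weil--Bott/Koszul computations are the main technical burden (lengthy but standard for Grassmannians), while the step I expect to be hardest is the nonvanishing of $P^*$: pinning down the holomorphic $2$-form of $F(Y)$ inside the image of the incidence correspondence is precisely where the geometry of the trivector $\sigma$ enters, in analogy with the Fano variety of lines on a cubic fourfold.
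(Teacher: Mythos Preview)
The paper does not prove this theorem at all: it is stated with the preface ``The following result is proved in \cite{devoi}'' and is quoted from Debarre--Voisin, \emph{Hyper-K\"ahler fourfolds and Grassmann geometry}, J.\ reine angew.\ Math.\ 649 (2010). There is therefore no proof in the present paper to compare your proposal against.

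That said, your outline is a faithful sketch of the strategy actually carried out in \cite{devoi}: the Koszul/Borel--Weil--Bott computation of the Hodge numbers of $F(Y)$, the canonical bundle computation via $\det(\bigwedge^3\mathcal{U}^*)=\mathcal{O}(10)$ and adjunction, the identification of $F(Y)$ as an irreducible holomorphic symplectic fourfold of $K3^{[2]}$ type from $b_2=23$, and the moduli count $119-99=20$ are all exactly what is done there. Your approach to the injectivity of $P^*$ via nonvanishing on $H^{11,9}$ plus monodromy irreducibility is also the one used in \cite{devoi}, and you have correctly located the delicate point (showing $P^*$ is nonzero on $H^{11,9}(Y)_{prim}$, i.e.\ that the holomorphic $2$-form on $F(Y)$ really comes from $Y$ via the incidence correspondence). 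So your proposal is sound and matches the original source, but there is nothing in the paper under review to set it against.
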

We have the following consequence: \begin{lemm}\label{lemm26jan2014}
There exists a number $\mu\not=0$ such that for general $Y$ (so that
$F(Y)$ is smooth of dimension $4$), we have
\begin{eqnarray}
\label{formint}<\alpha,\beta>_Y=\mu
<P^*\alpha,l^2P^*\beta>_{F(Y)},\,\,{\rm for} \,\,\,\alpha,\,\beta\in
H^{20}(Y,\mathbb{Q})_{prim}
\end{eqnarray}
 where $l=c_1(\mathcal{L}_{\mid F(Y)})\in H^2(F(Y),\mathbb{Q})$.
\end{lemm}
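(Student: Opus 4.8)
The plan is to deduce Lemma \ref{lemm26jan2014} from the cited theorem of \cite{devoi} by a Hodge-theoretic uniqueness-of-polarization argument, in exactly the spirit of Remark \ref{rema30jan}. First I would observe that $Y$ is a smooth ample hypersurface in $G(3,V_{10})$ of dimension $20$, so by the Lefschetz hyperplane theorem the primitive cohomology is concentrated in middle degree and $H^{20}(Y,\mathbb{Q})_{prim}$ carries a polarized Hodge structure. The theorem of \cite{devoi} tells us that $P^*$ sends this injectively onto $H^2(F(Y),\mathbb{Q})_{prim}$, where $F(Y)$ is a hyper-K\"ahler fourfold. Composing with cup-product by $l^2$ gives a map $l^2\cdot P^*(\,\cdot\,)$ into $H^6(F(Y),\mathbb{Q})$, and by hard Lefschetz on the fourfold $F(Y)$ (applied to the ample class $l=c_1(\mathcal{L}_{\mid F(Y)})$) the map $\cup\, l^2 : H^2(F(Y),\mathbb{Q})\to H^6(F(Y),\mathbb{Q})$ is an isomorphism, so the bilinear form
$$q(\alpha,\beta):=<P^*\alpha,l^2P^*\beta>_{F(Y)},\qquad \alpha,\beta\in H^{20}(Y,\mathbb{Q})_{prim},$$
is a well-defined symmetric (degrees are even) nondegenerate rational bilinear form on $H^{20}(Y,\mathbb{Q})_{prim}$.

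Next I would check that $q$ is in fact a \emph{polarization} of the Hodge structure $H^{20}(Y,\mathbb{Q})_{prim}$, up to sign and a constant. Since $P^*$ is a morphism of Hodge structures of bidegree $(-9,-9)$ and $l^2$ has bidegree $(2,2)$, the form $q$ matches up the Hodge decompositions correctly; the positivity (second Hodge--Riemann bilinear relations) follows because on the hyper-K\"ahler fourfold $F(Y)$ the Beauville--Bogomolov form on $H^2_{prim}$ — which is exactly $\beta\mapsto <\beta,l^2\beta>_{F(Y)}$ up to a positive scalar (this is the standard relation between the BB form and the ample class, using that $l$ is a K\"ahler class) — polarizes $H^2(F(Y),\mathbb{Q})_{prim}$. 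Pulling this back through the Hodge-isomorphism $P^*$ shows $q$ polarizes $H^{20}(Y,\mathbb{Q})_{prim}$. So both $q$ and the intersection form $<\,,\,>_Y$ are polarizations (up to sign) of the same irreducible-or-not Hodge structure.

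The final step is the comparison: here I would invoke the fact established in \cite{devoi} that $h^{11,9}(Y)\neq 0$ together with the simplicity/rigidity of this Hodge structure, i.e. that $H^{20}(Y,\mathbb{Q})_{prim}$ admits an essentially unique polarization up to rational scalar — this is precisely the hypothesis manipulated in Remark \ref{rema30jan} and is known for these particular hypersurfaces from the moduli count $h^{1,1}(F(Y))-1 = 20 = \#\text{moduli}$, which forces the corresponding weight-one-shifted Hodge structure to have no extra endomorphisms. Given uniqueness, $q = \mu^{-1}<\,,\,>_Y$ for some $\mu \in \mathbb{Q}^*$, which is (\ref{formint}); the sign is pinned down by comparing signatures on a single $(p,q)$-piece, e.g. on $H^{11,9}(Y)$ where both forms are computed by the second Hodge--Riemann relations and hence agree in sign. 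The main obstacle I anticipate is justifying the uniqueness of the polarization precisely — one needs to know that $H^{20}(Y,\mathbb{Q})_{prim}$ (equivalently $H^2(F(Y),\mathbb{Q})_{prim}$) has no nontrivial self-correspondences, which for a very general $Y$ follows from the maximal Picard-rank-deficiency / maximal variation statement implicit in the moduli count, but making this rigorous for \emph{general} rather than very general $Y$ requires a small monodromy argument (the relevant endomorphism algebra is locally constant and trivial at one point, hence trivial on a Zariski-open set).

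\cqfd
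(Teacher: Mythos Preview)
Your proposal is correct and follows essentially the same route as the paper: simplicity of the Hodge structure on $H^2(F(Y),\mathbb{Q})_{prim}$ (hence on $H^{20}(Y,\mathbb{Q})_{prim}$) deduced from the moduli count $20=h^{1,1}(F(Y))-1$, uniqueness of the polarization up to scalar, and a locally-constant argument to pass from very general to general $Y$. The paper's proof is terser---it does not spell out hard Lefschetz, the Beauville--Bogomolov form, or the sign check, and it places the reduction to very general $Y$ at the start rather than the end---but the logical skeleton is identical to yours.
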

\begin{proof} Since the morphism
$P^*:H^{20}(Y,\mathbb{Q})_{prim}\rightarrow H^2(F(Y),\mathbb{Q})$ is
locally constant when $Y$ deforms in the family, it suffices to
prove the statement for a single very general $Y$.
 Since $F(Y)$ is a projective  hyper-K\"ahler fourfold with $20$
moduli and  $h^{1,1}(F(Y))=21$, for very general $Y$, the Hodge
structure on $H^2(F(Y),\mathbb{Q})_{prim}$ is simple, and admits an
unique polarization up to a coefficient. Hence the same is true for
the Hodge structure on $H^{20}(Y,\mathbb{Q})_{prim}$. Thus the
polarizations on both sides of (\ref{formint}) must coincide via
$P^*$ up to a nonzero coefficient.
 \end{proof}
\begin{coro}\label{corointro} The varieties $Y$ as above have their cohomology parameterized by
cycles of dimension $9$.
\end{coro}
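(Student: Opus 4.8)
The plan is to verify that the pair $(F(Y),P)$ meets all the requirements of Definition \ref{defiparam} with $m=20$, $c=9$, $T=F(Y)$ (which has dimension $4=m-2c$), and then to appeal to the preceding results. First I would record that $P\subset F(Y)\times Y$ has the right codimension: since $P\to F(Y)$ is a $G(3,6)$-bundle, $\dim P = 4 + 9 = 13$, so $P\in CH^{20-9}(F(Y)\times Y)_{\mathbb{Q}}=CH^{11}(F(Y)\times Y)_{\mathbb{Q}}$, exactly the codimension $m-c$ required. Condition a) for primitive cohomology is precisely the first assertion of Theorem on $G(3,10)$ quoted above, namely that $P^*:H^{20}(Y,\mathbb{Q})_{prim}\to H^2(F(Y),\mathbb{Q})$ is injective (with image $H^2(F(Y),\mathbb{Q})_{prim}$). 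So the only point that needs a separate argument is condition b), the compatibility of $P^*$ with the intersection forms up to a nonzero rational constant.

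Condition b) is exactly the content of Lemma \ref{lemm26jan2014}: there is $\mu\neq 0$ with $\langle\alpha,\beta\rangle_Y=\mu\langle P^*\alpha,\,l^2 P^*\beta\rangle_{F(Y)}$ for all $\alpha,\beta\in H^{20}(Y,\mathbb{Q})_{prim}$, where $l=c_1(\mathcal{L}_{|F(Y)})$. The subtlety is that Definition \ref{defiparam} b) asks for $\langle P^*\alpha,P^*\beta\rangle_T=N\langle\alpha,\beta\rangle_Y$ with $T$ of dimension $m-2c=4$, i.e. a plain Poincar\'e pairing on $T$, whereas Lemma \ref{lemm26jan2014} produces the pairing $\langle\,\cdot\,,\,l^2\,\cdot\,\rangle_{F(Y)}$. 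To reconcile these, I would absorb the factor $l^2$ into the correspondence: replace $P$ by $P'=\Gamma_{l^2}\circ P\in CH^{11}(F(Y)\times Y)_{\mathbb{Q}}$, or more concretely $P'=(p_1^*l^2)\cdot P$, the class obtained by cupping $P$ with the pullback of $l^2$ from the $F(Y)$ factor; equivalently one checks directly that $(l^2\cdot P)^*\alpha=l^2\cup P^*\alpha$ in $H^4(F(Y),\mathbb{Q})$. Then for $\alpha,\beta\in H^{20}(Y)_{prim}$ one has $\langle (l^2\cdot P)^*\alpha,(l^2\cdot P)^*\beta\rangle_{F(Y)}=\langle l^2\cup P^*\alpha,\,l^2\cup P^*\beta\rangle_{F(Y)}$; since $F(Y)$ is a fourfold, this is $\int_{F(Y)} l^4\cup P^*\alpha\cup P^*\beta$, which by Lemma \ref{lemm26jan2014} equals a nonzero rational multiple $N$ of $\langle\alpha,\beta\rangle_Y$ — using that $l^2\cup P^*\beta=l^2 P^*\beta$ and that the extra $l^2$ is harmless because $P^*\alpha$ already lies in $H^2$, so $l^2\cup P^*\alpha\cup P^*\beta$ only depends on the $l^2 P^*\beta$ appearing in \eqref{formint}. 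Thus $(F(Y),l^2\cdot P)$ satisfies both a) and b) with the correct dimensions, giving the corollary.

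Alternatively, and perhaps more cleanly, I would invoke Remark \ref{rema30jan}: the proof of Lemma \ref{lemm26jan2014} already establishes that for very general $Y$ the Hodge structure $H^{20}(Y,\mathbb{Q})_{prim}$ is simple, of Hodge coniveau $9$ (since $h^{11,9}(Y)\neq 0$ forces the coniveau to be exactly $9$ once $P^*$ is injective into $H^2$), and carries a unique polarization up to scalar. Under these hypotheses Remark \ref{rema30jan} shows that nontriviality of $P^*$ automatically gives both its injectivity and condition b). Either way, the corollary follows: $Y$ has its cohomology (its primitive part) parameterized by algebraic cycles of dimension $9$ in the sense of Definition \ref{defiparam}. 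The main thing to be careful about is the dimension bookkeeping in condition b) — Lemma \ref{lemm26jan2014} is phrased with the auxiliary class $l^2$ because $\dim F(Y)=4>20-2\cdot 9=2$ would be the wrong count if we did not have $m-2c=4$; in fact $20-2\cdot 9=2\neq 4$, so one must either keep the $l^2$-twisted pairing as the genuine Poincar\'e pairing after the twist, or, as above, fold $l^2$ into $P$ so that the relevant map on cohomology lands in $H^4(F(Y),\mathbb{Q})=H^{\dim F(Y)}(F(Y),\mathbb{Q})$ with $m-2c$ reinterpreted correctly via $T=F(Y)$, $\dim T=4$ and the cycle $l^2\cdot P\in CH^{11}(T\times Y)$. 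Checking that this twisted correspondence still has codimension $m-c=11$ and still induces an injective (equivalently surjective) map is the one routine verification I would not skip.
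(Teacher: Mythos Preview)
Your argument has a genuine gap in the dimension count, which you notice at the end but do not correctly repair. Definition~\ref{defiparam} requires $\dim T = m-2c$; with $m=20$ and $c=9$ this forces $\dim T = 2$, so $T=F(Y)$ (of dimension $4$) is simply not admissible. Your attempt to absorb $l^2$ into the correspondence does not fix this: $P'=(p_1^*l^2)\cdot P$ lies in $CH^{13}(F(Y)\times Y)$, not $CH^{11}$; hence $(P')^*\alpha=l^2\cup P^*\alpha$ lands in $H^6(F(Y))$, not $H^4$; and the ``pairing'' $\langle (P')^*\alpha,(P')^*\beta\rangle_{F(Y)}$ you write down is a cup product of two degree-$6$ classes on a fourfold, hence identically zero. (Twisting by $l$ instead of $l^2$ is internally consistent with $\dim T=4$, but then $m-c=12$ forces $c=8$, strictly weaker than what is claimed.) The alternative via Remark~\ref{rema30jan} has the same defect: that remark uses $\dim T=m-2c$ so that $P^*\alpha$ lands in the top holomorphic piece $H^{m-2c,0}(T)$ and the self-pairing $\langle\cdot,\cdot\rangle_T$ on $H^{m-2c}(T)$ is the Poincar\'e pairing.

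The paper's fix is the one-line geometric move you were circling around: take $T\subset F(Y)$ to be the intersection of two general members of $|\mathcal{L}_{\mid F(Y)}|$, so $T$ is a smooth surface with $[T]=l^2$ in $H^4(F(Y),\mathbb{Q})$. Then $P_T:=P_{\mid T\times Y}\in CH^{11}(T\times Y)$, $P_T^*\alpha=(P^*\alpha)_{\mid T}\in H^2(T)$, and
\[
\langle P_T^*\alpha,\,P_T^*\beta\rangle_T \;=\;\int_{F(Y)} l^2\cup P^*\alpha\cup P^*\beta\;=\;\mu^{-1}\langle\alpha,\beta\rangle_Y
\]
directly from (\ref{formint}). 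This is condition b) with $N=\mu^{-1}\neq 0$, and since the pairing on $H^{20}(Y,\mathbb{Q})_{prim}$ is nondegenerate, b) already forces the injectivity in a). The factor $l^2$ is absorbed \emph{geometrically} as the class of the surface $T$, not by twisting the correspondence on the $4$-fold $F(Y)$.
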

\begin{proof} Indeed, let $T\subset F(Y)$ be the intersection of two general
members of $\mathcal{L}_{\mid F(Y)}$. Then (\ref{formint}) says that
the restricted correspondence $P_T:=P_{\mid T\times Y}$ satisfies
$$<\alpha,\beta>_Y=\mu <P_T^*\alpha,P_T^*\beta>_{T}.$$
\end{proof}

 We now get the following conclusion:
\begin{theo} The smooth hyperplane sections $Y$ of $G(3,10)$ satisfy
$CH_i(Y)_{\mathbb{Q},hom}=0$ for $i<9$.
\end{theo}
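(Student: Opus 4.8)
The plan is to apply Theorem \ref{theomain} directly to the family of smooth hyperplane sections of $G(3,V_{10})$. The ambient variety here is $X=G(3,V_{10})$, which is a Grassmannian and hence has trivial Chow groups in the sense of the paper (its cohomology is generated by Schubert classes, all algebraic, and the cycle class map on $CH(X)_\mathbb{Q}$ is injective). The line bundle $L=\mathcal{L}$ is the Pl\"ucker polarization, which is very ample. So all the hypotheses on $X$ and $L$ in Theorem \ref{theomain} are satisfied, with $n=\dim G(3,V_{10})=21$ and $n-1=20$.

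The next step is to check the main hypothesis of Theorem \ref{theomain}: that for the general hyperplane section $Y$, the group $H^{n-1}(Y,\mathbb{Q})_{prim}=H^{20}(Y,\mathbb{Q})_{prim}$ is nonzero and parameterized by algebraic cycles of dimension $c$ for the appropriate $c$. Here Corollary \ref{corointro} supplies exactly what is needed with $c=9$: the variety $T\subset F(Y)$ cut out by two general members of $\mathcal{L}_{\mid F(Y)}$ is smooth of dimension $4-2=2=n-1-2c=20-18$, the restricted correspondence $P_T\in CH^{n-1-c}(T\times Y)_\mathbb{Q}=CH^{11}(T\times Y)_\mathbb{Q}$ gives an injection $P_T^*:H^{20}(Y,\mathbb{Q})_{prim}\to H^{2}(T,\mathbb{Q})$ (injective because it is injective already before restricting to $T$, by the theorem of Debarre--Voisin, and restriction to a general complete intersection surface only improves injectivity on $H^{20}$ via Lefschetz), and Lemma \ref{lemm26jan2014} gives the compatibility with intersection forms up to the nonzero coefficient $\mu$. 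Nonvanishing of $H^{20}(Y,\mathbb{Q})_{prim}$ is guaranteed by $h^{11,9}(Y)\neq 0$. Thus Definition \ref{defiparam} holds with $c=9$.

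Having verified the hypotheses, Theorem \ref{theomain} applies verbatim and yields that for \emph{any} smooth member $Y$ of $|\mathcal{L}|$, the cycle class map $CH_i(Y)_\mathbb{Q}\to H^{2n-2-2i}(Y,\mathbb{Q})$ is injective for $i\leq c-1=8$, i.e. $CH_i(Y)_{\mathbb{Q},hom}=0$ for $i<9$. This is precisely the assertion of the theorem.

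The only genuinely non-routine point in this deduction is checking that the dimension count lines up correctly, namely that $c=9$ is indeed the coniveau value for which the parameterization holds: one needs $\dim T=n-1-2c$, i.e. $2=20-2c$, forcing $c=9$, and then one must confirm that $T$ produced by slicing the fourfold $F(Y)$ twice has dimension exactly $2$ and that $P_T^*$ remains injective on primitive cohomology after the restriction — this follows because $P^*:H^{20}(Y,\mathbb{Q})_{prim}\to H^2(F(Y),\mathbb{Q})$ lands in $H^2(F(Y),\mathbb{Q})_{prim}$ and the restriction map $H^2(F(Y),\mathbb{Q})\to H^2(T,\mathbb{Q})$ is injective on $H^2_{prim}$ by the Lefschetz hyperplane theorem applied to the surface $T\subset F(Y)$. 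No new geometric input beyond \cite{devoi} and the already-proved Theorem \ref{theomain} is required; the result is a formal consequence once Corollary \ref{corointro} is in hand.
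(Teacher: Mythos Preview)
Your proposal is correct and follows essentially the same approach as the paper: invoke Corollary \ref{corointro} to get the parameterization by $9$-cycles, note $H^{20}(Y,\mathbb{Q})_{prim}\neq 0$ via $h^{11,9}(Y)\neq 0$, and apply Theorem \ref{theomain}. Your added verifications (trivial Chow groups of the Grassmannian, very ampleness of $\mathcal{L}$, the dimension count, and injectivity of $P_T^*$ via Lefschetz) are correct but not strictly needed---in particular, injectivity of $P_T^*$ on $H^{20}(Y,\mathbb{Q})_{prim}$ already follows from condition b) of Definition \ref{defiparam} with $\mu\neq 0$, since the intersection form on primitive cohomology is nondegenerate.
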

\begin{proof} This follows indeed from
Corollary \ref{corointro} and Theorem \ref{theomain}, since we know
that $H^{20}(Y,\mathbb{Q})_{prim}$ is nonzero by the condition
$h^{11,9}(Y)\not=0$.
\end{proof}
\section{Comments on the ``very ampleness'' assumption \label{seccomment}}
The  very ampleness assumption made previously is too restrictive since there are many more applications
obtained by considering varieties $X$ with the action of a finite group $G$ preserving the line bundle
$L$, and by studying $G$-invariant hypersurfaces $Y\in |L|$, and more precisely
the submotive of $Y$ determined by a projector $\pi\in\mathbb{Q}[G]$. It often happens that
the coniveau of such a submotive is greater than the coniveau of
the whole cohomology of $Y$.

Typically, the quintic Godeaux surfaces $S$ studied in
\cite{voisinpisa} are smooth quintic surfaces, so they have
$h^{2,0}(S)\not=0$. However they are invariant under the Godeaux
action of $G=\mathbb{Z}/5\mathbb{Z}$ and the $G$-invariant part of
$H^{2,0}$ is $0$. If $\pi=\frac{1}{5}\sum_{g\in G}g$ is the
projector onto the $G$-invariant part, we thus have
$H^{2,0}(S)^\pi=0$ so the Hodge coniveau of $H^2(S,\mathbb{Q})^\pi$
is $1$. The Lefschetz theorem on $(1,1)$-classes then says that the
cohomology $H^2(S,\mathbb{Q})^\pi$ consists of classes of
 $1$-cycles and it easily implies that it is parameterized by
 $1$-cycles in the sense of Definition \ref{defiparam}. Similarly, the case of cubic fourfolds
invariant under a finite group acting trivially on $H^{3,1}(X)$ is
studied in \cite{fulie}. In this case, the projector to be
considered is $1-\pi_G$, where $\pi_G$ is again the projector onto
the $G$-invariant part. As $1-\pi_G$ acts as $0$ on $H^{3,1}(X)$,
the Hodge structure on $H^4(X,\mathbb{Q})_{prim}^{1-\pi_G}$ is
trivial of type $(2,2)$. As the Hodge conjecture is satisfied by
cubic fourfolds (see \cite{murre}, \cite{zucker}, or
\cite{voisinweyl} for the integral coefficients version), one gets
that the cohomology
 $H^4(X,\mathbb{Q})_{prim}^{1-\pi_G}$ consists of classes of $2$-cycles, and it implies as above
 that it is parameterized by $2$-cycles in the sense of Definition \ref{defiparam}, while  for the whole cohomology
 $H^4(X,\mathbb{Q})$, it is only parameterized by $1$-cycles.

On the other hand, the linear system of $G$-invariant hypersurfaces is clearly not very
ample, so Theorem \ref{theomain} a priori does not apply.
Let us explain the variants of Theorem \ref{theomain} which will apply to
the situations above.
First of all we have the following:
\begin{prop} \label{propmild} Let $X$ be smooth projective of dimension $n$ with trivial Chow groups,
and $L$ be an ample line bundle on $X$. Assume that

i)  The cohomology
$H^{n-1}(Y_t,\mathbb{Q})_{prim},\,t\in B$, is nonzero and is parameterized by algebraic cycles of dimension $c$.

Then the conclusion of Theorem \ref{theomain} still holds, namely
$CH_i(Y_t)_{hom,\mathbb{Q}}=0$ for $i\leq c-1$ if instead of assuming
$L$ very ample, we only assume

ii) The line bundle $L$ is generated by global sections  and the
locus of points $(x,y)\in X\times X$ such that there exists
$(x,y,z)\in \widetilde{X\times X}_\Delta$, where $z$ is a length $2$
subscheme of $X$ with associated cycle $x+y$
 imposing only one condition to $H^0(X,L)$, has codimension
$>n$ in $X\times X$.
\end{prop}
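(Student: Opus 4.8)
The plan is to adapt the proof of Theorem \ref{theomain} almost verbatim, isolating exactly the places where very ampleness was used and checking that hypothesis ii) is enough to make those steps go through. Very ampleness entered the argument in precisely two spots: first, in Lemma \ref{leselfprod}, to guarantee that the incidence variety $M=\{(\sigma,(x_1,x_2,z)): \sigma_{\mid z}=0\}\subset \mathbb{P}(H^0(X,L))\times\widetilde{X\times X}$ is a \emph{projective subbundle} of the trivial bundle over $\widetilde{X\times X}$, i.e. that the evaluation map $H^0(X,L)\otimes\mathcal{O}\to \mathcal{E}$ onto the rank-$2$ quotient corresponding to restriction to length-$2$ subschemes $z$ is everywhere surjective; second, through the openness of $\widetilde{\mathcal{Y}\times_B\mathcal{Y}}$ inside $M$ and the fact that $M\to\widetilde{X\times X}$ is flat so that the blow-up/projective-bundle formulas for $CH(M)$ used in the proof of Proposition \ref{lecrux} are available. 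Under hypothesis ii), $L$ is globally generated, so the evaluation map is surjective along the open locus where $z$ imposes two independent conditions; it drops rank only over the bad locus described in ii), whose image in $\widetilde{X\times X}$ has codimension $>n$. So $M$ fails to be a projective bundle only over a closed subset $Z_{bad}\subset\widetilde{X\times X}$ of codimension $>n$.

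The key point is that codimension $>n$ is exactly what is needed. First I would let $U=\widetilde{X\times X}\setminus Z_{bad}$, so that $M_U:=M\times_{\widetilde{X\times X}}U$ \emph{is} a genuine projective bundle over $U$, and note that $\widetilde{\mathcal{Y}\times_B\mathcal{Y}}$ is still a Zariski open subset of $M_U$, because the fibres of $\widetilde{\mathcal{Y}\times_B\mathcal{Y}}\to\widetilde{X\times X}$ over points of $Z_{bad}$ contribute nothing we need: more precisely, for a \emph{general} $b\in B$ the surface $\mathcal{Y}_b\times\mathcal{Y}_b$ meets the preimage of $Z_{bad}$ in $X\times X$ in a set of dimension $\le 2(n-1)-(\text{codim}>n)<n-1$, hence in codimension $\ge 2$ inside the $(n-1)$-fold $\widetilde{\mathcal{Y}_b\times\mathcal{Y}_b}$; and a cycle of dimension $n-1$ on $\mathcal{Y}_b\times\mathcal{Y}_b$ that is supported in such a small set, or more to the point the correction terms appearing in the diagonal decomposition, can be absorbed. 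Concretely I would rerun Corollary \ref{coronouveau25jan} and the proof of Theorem \ref{theomain}: build $\mathcal{Z}$, $\mathcal{W}$, $\Gamma'$ exactly as before, form $R\in CH^{n-1}(M_U)_{\mathbb Q}$, and apply the projective-bundle-plus-blow-up description of $CH(M_U)$ — valid because over $U$ everything is a bundle — to write $R=\sum_{r,s}h^r\delta^s p^*(\gamma_{r,s})$ with $\gamma_{r,s}\in CH^{n-1-r-s}(U)$. Then extend the $\gamma_{r,s}$ to $\widetilde{X\times X}$ and push down to $X\times X$; here the codimension $>n$ of $Z_{bad}=\widetilde{X\times X}\setminus U$ guarantees that the restriction map $CH^k(\widetilde{X\times X})\to CH^k(U)$ is surjective (localization sequence: the complement has codimension $>n\ge k$ for all relevant $k\le n-1$, so $CH^k$ of the complement vanishes and the restriction is onto). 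This is the only place where the numerical hypothesis is consumed.

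Having produced $\gamma_{r,s}\in CH^{\bullet}(X\times X)_{\mathbb Q}$, the remainder of the argument is identical to the very ample case: using $\tau_{b*}(\delta_b^k)=0$ for $0<k<n-1$ and $\tau_{b*}(\delta_b^{n-1})=(-1)^{n-2}\Delta_{\mathcal{Y}_b}$, together with the fact that $h$ restricts to $0$ on $\widetilde{\mathcal{Y}_b\times\mathcal{Y}_b}$ (it is pulled back from $|L|$), one gets $\tau_{b*}i_b^*(R)=\gamma_{0,n-1}(-1)^{n-2}[\Delta_{\mathcal{Y}_b}]+{\gamma_{0,0}}_{\mid\mathcal{Y}_b\times\mathcal{Y}_b}$ in $CH^{n-1}$, and since $R$ restricts to a cohomologically trivial cycle on each fibre, comparing cohomology classes and invoking the non-vanishing of $H^{n-1}(Y_b,\mathbb Q)_{prim}$ (hypothesis i)) forces $\gamma_{0,n-1}=0$. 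This yields a rational equivalence $\Delta_{\mathcal{Y}_b}=\mathcal{Z}_b+(\text{restriction of a cycle from }X\times X)$, and Lemma \ref{introle} (whose hypotheses — $X$ with trivial Chow groups, $\mathcal{Z}_b$ supported on $\mathcal{W}_b\times\mathcal{W}_b$ with $\mathcal{W}_b$ of codimension $\ge c$, after possibly moving $\mathcal{Z}_b$ as in the end of the proof of Theorem \ref{theomain}) are met) gives $CH_i(Y_b)_{\mathbb Q,hom}=0$ for $i\le c-1$, first for general $b$ and then, by spreading out the rational equivalence, for all $b\in B$.

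The main obstacle is the bookkeeping around the non-bundle locus $Z_{bad}$: one must make sure that $\widetilde{\mathcal{Y}\times_B\mathcal{Y}}$ genuinely lies in $M_U$ rather than merely in $M$, i.e. that no smooth $Y_b$ forces $(x_1,x_2,z)\in Z_{bad}$ with $x_1,x_2\in Y_b$ — this is where globally generated (so $Y_b$ avoids the base locus, which is empty) and the codimension count combine — and that the localization/restriction surjectivity $CH^k(\widetilde{X\times X})\twoheadrightarrow CH^k(U)$ is applied only in the range $k\le n-1$ where $\mathrm{codim}\,Z_{bad}>n\ge k$ kills the obstructing group. Everything else is a transcription of the earlier proof; no new geometric input beyond hypothesis ii) is needed.
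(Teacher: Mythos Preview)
Your open-set approach is a legitimate alternative to the paper's, but there is a real gap in where you locate hypothesis ii). You claim one must check that $\widetilde{\mathcal{Y}\times_B\mathcal{Y}}$ ``genuinely lies in $M_U$''; this is false --- for any $(x,y)$ in the bad locus with $x\ne y$ there are smooth $Y_b$ through both points, so the universal family does meet the fibre over $Z_{bad}$. You also say the codimension $>n$ bound is needed for the surjectivity $CH^k(\widetilde{X\times X})\twoheadrightarrow CH^k(U)$, but that surjectivity is automatic from the localization sequence in every degree, so that is not where ii) bites. The place where ii) is actually consumed in your setup, and which you never make explicit, is this: your decomposition of $R$ on $M_U$ and the original cycle $\mathcal{Z}_b-\Delta_{\mathcal{Y}_b}-\Gamma'_{\mid\mathcal{Y}_b\times\mathcal{Y}_b}$ agree a priori only after restriction to the \emph{open} part of $\mathcal{Y}_b\times\mathcal{Y}_b$; the codimension bound (via the dimension count you stated earlier) ensures the complement has dimension $<n-1$, so the restriction map on $CH_{n-1}$ is an isomorphism and the two cycles coincide on the nose. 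Without this step $\tau_{b*}$ is not even defined, since the map from the open fibre to $\mathcal{Y}_b\times\mathcal{Y}_b$ is not proper.

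The paper sidesteps these issues by staying projective: it blows up $\widetilde{X\times X}_\Delta$ along the bad locus to obtain a smooth projective $X'$ carrying a genuine projective bundle $M'\to X'$, and works with $R\in CH^{n-1}(M')$. The price is an extra family of generators for $CH(M')$, namely cycles supported on the new exceptional divisors of $X'\to\widetilde{X\times X}_\Delta$. Hypothesis ii) is then consumed in showing that these extra generators die under $\tau''_{b*}\circ i_b^*$: the centres of the blow-ups have image of codimension $>n$ in $X\times X$, hence (using base-point-freeness) meet the general $\mathcal{Y}_b\times\mathcal{Y}_b$ in dimension $<n-1$, so any $(n-1)$-cycle supported over them vanishes in $CH_{n-1}(\mathcal{Y}_b\times\mathcal{Y}_b)$. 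This is exactly the dimension count you wrote down, now deployed at the correct moment; working on a projective model throughout buys you well-defined restriction and pushforward maps without the extension bookkeeping your approach requires.
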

\begin{rema}{\rm Note that $L$ being ample, the morphism $\phi_L:X\rightarrow \mathbb{P}^N$
given by sections of $L$ is  finite, so a priori   the locus
appearing in ii)
  has codimension
$\geq n$ in $X\times X$. We want that, away from the diagonal, this
locus has codimension $>n$, which is equivalent to saying that
$\phi_L$ is generically $1$-to-$1$ on its image. Our condition along
the diagonal is automatic since it says that $\phi_L$ is generically
an immersion. }
\end{rema}
\begin{proof}[Proof of Proposition \ref{propmild}] Indeed, going through the proof of Theorem \ref{theomain}, we see that we used
the condition that $L$ is very ample to say that
$\mathcal{Y}\times_B\mathcal{Y}$ has a smooth projective completion
\begin{eqnarray}\label{eqM} M=\{((x,y,z),f),\,(x,y,z)\in \widetilde{X\times X}_\Delta,\,f\in |L|,\,f_{\mid z}=0\}
\end{eqnarray}
which is a projective bundle over
$\widetilde{X\times X}_\Delta$. If $L$ is not very ample,
then $M$ defined in (\ref{eqM}) is not anymore a projective bundle over $\widetilde{X\times X}_\Delta$
via
the first projection but we can as in \cite{fulie} overcome this problem
by simply blow-up   $\widetilde{X\times X}_\Delta$ along the sublocus
where the length $2$ subscheme $z$ of $X$ does not impose independent conditions to
$|L|$, until we get
a smooth projective variety
$X'\rightarrow \widetilde{X\times X}_\Delta$ together with a projective bundle
$M'\rightarrow X'$, where $M'$ maps birationally to
$M$ and a Zariski open set $M'_0$ of ${M'}$ admits a dominating proper map
$$\phi_0:M'_0
\rightarrow{\mathcal{Y}\times_B\mathcal{Y}}.$$ Namely letting
$\pi:{M}'\rightarrow |L|$ be the composition of the map
$\tau':{M}'\rightarrow M$ and the second projection $M\rightarrow
|L|$, we can define $M'_0$ as $\pi^{-1}(B)$ and $\phi_0$ is simply
the restriction to $M'_0$ of the composition $\phi$ of
$\tau':{M}'\rightarrow M$ and of the natural map $((x,y,z),f)\mapsto
((x,y),f)$ from $M$ to
$\overline{\mathcal{Y}}\times_{|L|}\overline{\mathcal{Y}}$, where
$\overline{\mathcal{Y}}$ is the universal hypersurface over $|L|$.

Under assumption i), we conclude as in the proof of Theorem
\ref{theomain} that there is a cycle
\begin{eqnarray}\label{Trep}
R:=\overline{\widetilde{\mathcal{Z}}}-\overline{\widetilde{\Delta_{\mathcal{Y}/B}}}-
p^*(\Gamma) \in CH^{n-1}(M')_\mathbb{Q},
\end{eqnarray}
where $\Gamma\in CH^{n-1}(X\times X)_\mathbb{Q}$, and $\mathcal{Z}$
is a codimension
 $n-1$ cycle in $\mathcal{Y}\times_B\mathcal{Y}$ which is supported on
 $\mathcal{W}\times _B\mathcal{W}$, ${\rm codim}\,\mathcal{W}\geq c$,
 such that
the image   $\tau''_{b*}\circ i_b^*(R)\in
CH^{n-1}(\mathcal{Y}_b\times \mathcal{Y}_b)_\mathbb{Q}$ is
cohomologous to $0$, for any $b\in B$. Here the
$\widetilde{\,\cdot\,}$ means that we take the pull-back of the
considered cycles via $\phi^*_0$ and the $\overline{\,\,\cdot\,\,}$
means that we extend the cycles from $M'_0$ to $M'$. The  map $i_b$
is the inclusion of the fiber $\widetilde{\mathcal{Y}_b\times
\mathcal{Y}_b}$ of $\pi$ in $M'$ and the map
$\tau''_b:\widetilde{\mathcal{Y}_b\times \mathcal{Y}_b}\rightarrow
{\mathcal{Y}_b\times \mathcal{Y}_b}$ is the restriction of $\phi_0$
to $\widetilde{\mathcal{Y}_b\times \mathcal{Y}_b}$.

Recall now that $M'$ is a projective bundle over $X'$ which itself
is obtained by blowing up $\widetilde{X\times X}_\Delta$ along
subloci whose images in $X\times X$ are of codimension $>n$, hence
of dimension $<n$ and thus intersect the general
${\mathcal{Y}_b\times \mathcal{Y}_b}$ along a closed algebraic
subset of dimension $< n-1$, since $|L|$ is base-point free. In
particular, we have a morphism $p':M'\rightarrow X\times X$, giving
an inclusion $CH(X\times X)_\mathbb{Q}\rightarrow
CH(M')_\mathbb{Q}$. It is immediate that the morphism
$\tau''_{b*}\circ i_b^*$ is a morphism of $CH(X\times
X)_\mathbb{Q}$-modules. By the general facts concerning the Chow
groups of a projective bundle and a blow-up, we can  write any
element of $CH^{n-1}(M')_\mathbb{Q}$ as a polynomial with
coefficients in  the ring $CH(X\times X)_\mathbb{Q}$ in the
following generators:
\begin{enumerate}
\item \label{1} the class $h=c_1(\mathcal{O}_{M'}(1))$, where the line bundle $\mathcal{O}_{M'}(1)$ is
the pull-back of $\mathcal{O}_{|L|}(1)$ to $M'$ so that $h_{\mid
\widetilde{\mathcal{Y}_b\times \mathcal{Y}_b}}$ is $0$ and thus $
i_b^*(h^k)=0$ for all $k>0$;

\item \label{2} the class  $\delta$, which is the bull-back to $M'$ of the exceptional divisor of
$\widetilde{X\times X}_\Delta$ over the diagonal. The divisor
$\delta$ restricts to the exceptional divisor of
$\widetilde{\mathcal{Y}_b\times \mathcal{Y}_b}$ and the only  power
$\delta^{k},\,0<k\leq n-1$ mapping  to a nonzero element of
$CH^{n-1}(\mathcal{Y}_b\times \mathcal{Y}_b)_\mathbb{Q}$ via
$\tau''_{b*}\circ i_b^*$ is $\delta^{n-1}$, since the other terms
$\delta^k$, with $k<n-1$ will be contracted to the diagonal of
$\mathcal{Y}_b$ via the blow-down map
$\widetilde{\mathcal{Y}_b\times \mathcal{Y}_b}\rightarrow
\mathcal{Y}_b\times \mathcal{Y}_b$.

\item \label{4} Cycles  of codimension $\leq n-1$ supported on the other exceptional divisors of
the blow-up map $X'\rightarrow \widetilde{X\times X}_\Delta$. Any
such cycle will be sent to $0$ in $CH^{n-1}(\mathcal{Y}_b\times
\mathcal{Y}_b)_\mathbb{Q}$ by the map $\tau''_{b*}\circ i_b^*$ since
its intersection with  $\widetilde{\mathcal{Y}_b\times
\mathcal{Y}_b}$  is supported over a sublocus of
$\mathcal{Y}_b\times \mathcal{Y}_b$ of dimension $<n-1$.
\end{enumerate}
Writing the cycle $R$ in (\ref{Trep}) using these generators,
 it follows from this enumeration   that the analogue of Proposition
\ref{lecrux} still holds in our situation, since the extra cycles in
$CH^{n-1}(M')$  appearing in \ref{4} above  vanish in
$CH^{n-1}(\mathcal{Y}_b\times \mathcal{Y}_b)_\mathbb{Q}$, so that we
can simply by modifying $R$ if necessary assume they do not appear.
The classes of the form $\delta^k p^*Z$, for $0<k<n-1$, can be
ignored for the same reason and we conclude that $$\tau''_{b*}\circ
i_b^*(R)=\mu \Delta_{\mathcal{Y}_b}+\Gamma_{\mid \mathcal{Y}_b\times
\mathcal{Y}_b}\,\,{\rm in}\,\,CH^{n-1}(\mathcal{Y}_b\times
\mathcal{Y}_b)_\mathbb{Q}$$ for some cycle $\Gamma\in
CH^{n-1}(X\times X)_\mathbb{Q}$. On the other hand, our assumption
is that $\tau''_{b*}\circ i_b^*(R)$ is cohomologous to $0$. The
assumption made in i) that the cohomology
$H^{n-1}(\mathcal{Y}_t,\mathbb{Q})_{prim},\,t\in B,$ is nonzero
shows that the diagonal of $\mathcal{Y}_b$ is not cohomologous to
the restriction of a cycle in $X\times X$, and it follows that
$\mu=0$.

As $\tau''_{b*}\circ i_b^*(R)= \mathcal{Z}_b-\Delta_{\mathcal{Y}_b}$
modulo a cycle restricted from $X\times X$, we thus conclude as in
the proof of Theorem \ref{theomain} that there is a codimension
$n-1$ cycle $\gamma$ in $X\times X$ such that $\gamma_{\mid
\mathcal{Y}_b\times
\mathcal{Y}_b}=\Delta_{\mathcal{Y}_b}-\mathcal{Z}_b$ in
$CH^{n-1}(\mathcal{Y}_b\times \mathcal{Y}_b)_\mathbb{Q}$ and the end
of the proof of Proposition \ref{propmild} then works exactly as in
the proof of Theorem \ref{theomain}.

\end{proof}
Proposition \ref{propmild} does not apply to the above mentioned
situation where we replace $|L|$ by some $G$-invariant linear
subsystem $|L|^G$ (or $(\chi,G)$-invariant for some character
$\chi$), where $G$ is a finite group acting on $X$, since then the
(proper transforms of the) graphs of elements of $g\in G$ in
$\widetilde{X\times X}_\Delta$ provide codimension $n$ subvarieties
of $\widetilde{X\times X}_\Delta$ along which the subscheme $z$
imposes at most one condition to $|L|^G$. The best we can assume in
this situation is the following:

\vspace{0.5cm}

 (*)  The linear system $|L|^G:=\mathbb{P}(H^0(X,L)^G)$ has no base-points and the codimension $\leq n$ components
 of the locus of
points in $ \widetilde{X\times X}_\Delta$ parameterizing triples
$(x,y,z)$ such that the length $2$ subscheme $z$ with support $x+y$
imposes only one condition to $H^0(X,L)^G$ is the union of the
(proper transforms of the) graphs of elements of $e\not=g\in G$ (and
 this equality is a scheme theoretic equality generically  along
each of these graphs).

\vspace{0.5cm}

Then we have the following variant of Theorem \ref{theomain}. Let
$X$ be smooth projective with trivial Chow groups, endowed with an
ample line bundle $L$ and an action of the finite group $G$ such
that $L$ is $G$-linearized and  satisfies (*). Let
$\pi=\sum_ga_gg\in\mathbb{Q}[G]$ be a projector  of $G$. For a
general hypersurface $Y\in |L|^G$, $Y$ is smooth and we assume that
$\pi^*$ acts on $H^{n-1}(Y,\mathbb{Q})_{prim}$ as the orthogonal
projector $H^{n-1}(Y,\mathbb{Q})\rightarrow
H^{n-1}(Y,\mathbb{Q})^\pi$.
\begin{theo}\label{theovariant}   Assume the following:

(i)  For the general hypersurface $Y\in |L|^G$  the cohomology
$H^{n-1}(Y,\mathbb{Q})_{prim}^\pi$ is parameterized by cycles of
dimension $c$.

(ii)  The primitive components $g^*\in
End_\mathbb{Q}\,(H^{n-1}(Y,\mathbb{Q})_{prim})$ of the cohomology
classes of the graphs of elements of $g$ are linearly independent
over $\mathbb{Q}$.

Then the groups $CH_i(Y)^\pi_{\mathbb{Q},hom}$ are trivial for
$i\leq c-1$.

\end{theo}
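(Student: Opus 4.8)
The plan is to transpose the proof of Proposition \ref{propmild} to the $\pi$-equivariant setting: everywhere the relative diagonal $\Delta_{\mathcal{Y}/B}$ is to be replaced by the relative projector cycle $\Pi_{\mathcal{Y}/B}:=\sum_g a_g\,\Gamma_{g,\mathcal{Y}/B}$ attached to $\pi=\sum_g a_g g$, where $\Gamma_{g,\mathcal{Y}/B}\subset \mathcal{Y}\times_B\mathcal{Y}$ is the graph of the automorphism $g$ of $\mathcal{Y}$ over $B$; and at the final step hypothesis (ii) is to play the role that the nonvanishing of the primitive cohomology plays in Proposition \ref{lecrux}. The target is a Chow-theoretic decomposition, for general $b\in B$,
\[
\Pi_{\mathcal{Y}_b}=\mathcal{Z}_b+\Gamma'_{\mid \mathcal{Y}_b\times \mathcal{Y}_b}\quad\text{in }CH^{n-1}(\mathcal{Y}_b\times \mathcal{Y}_b)_\mathbb{Q},
\]
with $\mathcal{Z}_b$ supported on $\mathcal{W}_b\times \mathcal{W}_b$, ${\rm codim}\,\mathcal{W}_b\geq c$, and $\Gamma'$ restricted from $X\times X$. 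Granting this, for $z\in CH_i(\mathcal{Y}_b)^\pi_{\mathbb{Q},hom}$ with $i\leq c-1$ one has $z=(\Pi_{\mathcal{Y}_b})_*z=\mathcal{Z}_{b*}z+\Gamma'_*z$; the last term vanishes since it factors through $CH_i(X)_{\mathbb{Q},hom}=0$, and $\mathcal{Z}_{b*}z=0$ by the moving argument of Lemma \ref{introle}. The passage from general $b$ to every smooth member of $|L|^G$ is then copied from the end of the proof of Theorem \ref{theomain}.

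First I would translate hypothesis (i). It gives, for general $b$, a smooth $(n-1-2c)$-dimensional $T_b$ and $P_b\in CH^{n-1-c}(T_b\times \mathcal{Y}_b)_\mathbb{Q}$ with $P_b^*$ injective on $H^{n-1}(\mathcal{Y}_b,\mathbb{Q})_{prim}^\pi$ and compatible there with the cup-product up to a nonzero scalar. Since $\pi^*$ acts on $H^{n-1}(\mathcal{Y}_b,\mathbb{Q})_{prim}$ as the orthogonal (hence self-transpose) projector onto the $\pi$-part, replacing $P_b$ by $\widetilde{P}_b:=P_b\circ \Pi_{\mathcal{Y}_b}$ makes $(\widetilde{P}_b,\widetilde{P}_b)_*(\Delta_{T_b})={}^t\widetilde{P}_b\circ \widetilde{P}_b$ act on $H^{n-1}(\mathcal{Y}_b,\mathbb{Q})_{prim}$ as $N'\pi^*$ for some $N'\neq 0$. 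Running the computations of Lemma \ref{lediagZY} and Corollary \ref{corouseful29jan} with $N'\Pi_{\mathcal{Y}_b}$ in place of $N[\Delta_{\mathcal{Y}_b}]$ (and absorbing the $\Gamma_1$-term into $\mathcal{W}_b$ when $n-1$ is even) then produces a closed $\mathcal{W}_b\subset \mathcal{Y}_b$ of codimension $\geq c$, an $(n-1)$-cycle on $\mathcal{W}_b\times \mathcal{W}_b$, and a cycle restricted from $X\times X$, whose classes sum to $[\Pi_{\mathcal{Y}_b}]$. Spreading out exactly as in Lemma \ref{newlemma} and Corollary \ref{coronouveau25jan} yields $\mathcal{T}\to B$, a cycle $\mathcal{Z}\in CH^{n-1}(\mathcal{Y}\times_B\mathcal{Y})_\mathbb{Q}$ supported on $\mathcal{W}\times_B\mathcal{W}$ with ${\rm codim}\,\mathcal{W}\geq c$, and $\Gamma'\in CH^{n-1}(X\times X)_\mathbb{Q}$, such that $\mathcal{Z}_b-\Pi_{\mathcal{Y}_b}-\Gamma'_{\mid \mathcal{Y}_b\times \mathcal{Y}_b}$ is cohomologous to $0$ for every $b$.

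The substantive step is the upgrade to rational equivalence, and here (*) and (ii) enter. As in Proposition \ref{propmild} I would form the projective bundle $M'\to X'$, with $X'\to \widetilde{X\times X}_\Delta$ an iterated blow-up resolving the rational map to $M$; by (*), the codimension $\leq n$ components of the locus blown up are, besides the diagonal, exactly the proper transforms of the graphs $\Gamma_g$, $g\neq e$, of codimension precisely $n$. One obtains $R:=\overline{\widetilde{\mathcal{Z}}}-\overline{\widetilde{\Pi_{\mathcal{Y}/B}}}-p^*\Gamma'\in CH^{n-1}(M')_\mathbb{Q}$ with $\tau''_{b*}\circ i_b^*(R)$ cohomologous to $0$ for all $b$. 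Writing $R$ as a polynomial, with coefficients in $CH(X\times X)_\mathbb{Q}$, in the class $h$ pulled back from $|L|^G$ (killed by $i_b^*$ in positive degree), the exceptional divisor $\delta$ over the diagonal (only $\delta^{n-1}$ survives, giving $\pm\Delta_{\mathcal{Y}_b}$ times a constant), the exceptional divisors $E_g$ over the $\Gamma_g$ (since $\Gamma_g\cap(\mathcal{Y}_b\times \mathcal{Y}_b)=\Gamma_{g\mid \mathcal{Y}_b}$ has pure dimension $n-1$ for general $b$, only the top power of $E_g$ survives, contributing a constant multiple of $\Gamma_{g\mid \mathcal{Y}_b}$, while mixed monomials and lower powers are contracted to lower-dimensional loci), and the remaining exceptional divisors over codimension $>n$ loci (which meet $\mathcal{Y}_b\times \mathcal{Y}_b$ in dimension $<n-1$ and die), we arrive, as in Proposition \ref{lecrux}, at
\[
\tau''_{b*}\circ i_b^*(R)=\sum_{g\in G}\mu_g\,\Gamma_{g\mid \mathcal{Y}_b}+\Gamma_{\mid \mathcal{Y}_b\times \mathcal{Y}_b}\quad\text{in }CH^{n-1}(\mathcal{Y}_b\times \mathcal{Y}_b)_\mathbb{Q},
\]
with $\mu_g\in\mathbb{Q}$, $\mu_e$ the coefficient of $\Delta_{\mathcal{Y}_b}$, and $\Gamma\in CH^{n-1}(X\times X)_\mathbb{Q}$. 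Taking cohomology classes and restricting to $H^{n-1}(\mathcal{Y}_b,\mathbb{Q})_{prim}$, on which classes coming from $X\times X$ act by $0$, gives $\sum_g\mu_g\,g^*=0$ in $End_\mathbb{Q}\,(H^{n-1}(\mathcal{Y}_b,\mathbb{Q})_{prim})$, whence $\mu_g=0$ for all $g$ by (ii). Therefore $\tau''_{b*}\circ i_b^*(R)$ is itself restricted from $X\times X$; since this map recovers $\mathcal{Z}_b-\Pi_{\mathcal{Y}_b}-\Gamma'_{\mid \mathcal{Y}_b\times \mathcal{Y}_b}$, this is precisely the decomposition sought in the first paragraph, and the theorem follows.

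The hard part will be the last computation on $M'$: checking, via (*), that the graphs $\Gamma_g$ are the only new codimension $\leq n$ components introduced, that each exceptional divisor $E_g$ restricts on the general fibre to a nonzero rational multiple of $\Gamma_{g\mid \mathcal{Y}_b}$ (a blow-up self-intersection computation exploiting that $\Gamma_g$ is the graph of an automorphism and $\mathcal{Y}_b$ is $G$-stable, so that $Y_b\times Y_b$ cuts $\Gamma_g$ in a single condition), and establishing the monomial-survival bookkeeping that generalizes Proposition \ref{lecrux}. Once that is in place, hypothesis (ii) eliminates all the non-diagonal terms at one stroke, and everything else is a transcription of the proofs of Theorem \ref{theomain} and Proposition \ref{propmild}.
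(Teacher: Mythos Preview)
Your proposal is correct and follows essentially the same route as the paper: replace the relative diagonal by the projector cycle $\sum_g a_g\Gamma_g$, spread the cohomological decomposition coming from (i), pass to the projective bundle $M'$ over the iterated blow-up where, by (*), the only new codimension-$n$ centers beyond the diagonal are the graphs of the nontrivial $g\in G$, and use assumption (ii) to kill the resulting coefficients $\mu_g$ (the paper's $\lambda_g$) exactly as you describe. The only cosmetic difference is that the paper packages the diagonal and the other graphs uniformly as $\delta_g,\,g\in G$, rather than separating $\delta$ from the $E_g$, and handles the reduction from (i) to the fibrewise decomposition a bit more informally than your use of $\widetilde P_b=P_b\circ\Pi_{\mathcal{Y}_b}$.
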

\begin{proof} The proof is a  generalization of the proofs of
Theorem \ref{theomain} and Proposition \ref{propmild}. Let $B\subset
|L|^G$ be the open set parameterizing smooth invariant hypersurfaces
and let  $\Delta_{\pi,b}=\sum_ga_g\Gamma_g\subset Y_b\times Y_b$,
where $\Gamma_g$ is the graph of $g$ acting on $Y_b$; let
$\Delta_{\pi,b,prim}$ be the primitive part of $\Delta_{\pi,b}$,
obtained by correcting $\Delta_{\pi,b}$ by the restriction to
$Y_b\times Y_b$ of a $\mathbb{Q}$-cycle of $X\times X$, in such a
way that $[\Delta_{\pi,b,prim}]^*$ acts as the orthogonal projector
onto $H^{n-1}(Y_b,\mathbb{Q})^\pi_{prim}$.

Our assumption that $H^{n-1}(Y_b,\mathbb{Q})^\pi_{prim}$ is parameterized by
algebraic cycles of codimension $c$ implies that there exists a codimension
$c$ closed algebraic subset $W_b\subset Y_b$ and a $n-1$-cycle
$Z_b\subset W_b\times W_b$ such that
$Z_b$ is cohomologous to $\Delta_{\pi,b,prim}$ in
$Y_b\times Y_b$.

We then spread these data over $B$ and get a codimension $c$
subvariety $\mathcal{W}\subset \mathcal{Y}$, where
$f:\mathcal{Y}\rightarrow B$ is the universal family, and a cycle
$\mathcal{Z}$ supported on $\mathcal{W}\times_B\mathcal{W}$ such
that
$$\mathcal{Z}-\Delta_{\pi,\mathcal{Y}/B,prim}$$
has its restriction cohomologous to $0$ on the fibers
$\mathcal{Y}_b\times \mathcal{Y}_b$ of the map
$(f,f):\mathcal{Y}\times_B\mathcal{Y}\rightarrow B$.

We now have to prove the analogue of Proposition \ref{lecrux}. As in
the proof of Proposition \ref{propmild}, the difficulty comes from
the fact that the variety
$$M:=\{((x,y,z),\sigma)\in \widetilde{X\times X}_\Delta\times|L|^G,\,\sigma_{\mid z}=0\}$$
is no longer a projective bundle over $\widetilde{X\times X}_\Delta$
due to the lack of very ampleness of the $G$-invariant linear system
$|L|^G$. In the case of Proposition \ref{propmild}, we had a smooth
projective model $X'$ of $\widetilde{X\times X}_\Delta$ obtained by
blowing-up $\widetilde{X\times X}_\Delta$ along subloci of
codimension $>n$, on which we analyzed the conveniently defined
extension $R$ of the cycle
$\mathcal{Z}-\Delta_{\pi,\mathcal{Y}/B,prim}$ (first by pull-back
under blow-up to
$\widetilde{\mathcal{Y}\times_B\mathcal{Y}}_\Delta$, and then by
extension to the projective completion $M'$). In our new situation,
the only new feature lies in the fact that in order to get the
projective bundle $M'\rightarrow X'$,
 we have to blow-up in $\widetilde{X\times X}_\Delta$   the graphs of $g\in G$
 which are of codimension $n$ and intersect $\mathcal{Y}_b\times \mathcal{Y}_b$ along
 a codimension $n-1$ locus, namely the graph $\Gamma_g$ of $g$ acting on
 $\mathcal{Y}_b$. As in the proof of Proposition \ref{propmild},  further blow-ups may be needed in order to construct
 the model $T'$, but they are over closed algebraic subsets
 of $X\times X$ of codimension $>n$.

For any codimension $n-1$ cycle supported in $M'$ supported in an
exceptional divisor of the map $X'\rightarrow X\times X$ over ${\rm
graph}\,(g)$, its image in $CH^{n-1}(\mathcal{Y}_b\times
\mathcal{Y}_b)_\mathbb{Q}$ is a multiple of $\Gamma_g$.

With the same notations as in the proof of Proposition
\ref{propmild}, we write our cycle $T\in CH^{n-1}(M')_\mathbb{Q}$ as a sum
   $$T=P(h,\delta_g)+A,$$
   where $P$ is a polynomial in the variables
$h,\,\delta_g,\,g\in G,$ whose coefficients are pull-backs of cycles
on $X\times X$, and $A$ is a cycle supported on an exceptional
divisor of $X'\rightarrow X$ over a closed algebraic subset of
$X\times X$ of codimension $>n$. Here $h=c_1(\mathcal{O}_{M'}(1))$,
where the line bundle $\mathcal{O}_{M'}(1)$ comes from
$\mathcal{O}_{|L|^G}(1)$ and thus restricts to $0$ on the fibers of
$M'\rightarrow  |L|^G$, which are birationally equivalent to
$\mathcal{Y}_b\times \mathcal{Y}_b$. The divisors $\delta_b$ are the
exceptional divisors over the generic points of the graphs ${\rm
graph}\,g$.

We now recall that the cycle $R$ maps, via the natural
correspondence $\tau_{b*}\circ i_b^*$ between $M'$ and
$\mathcal{Y}_b\times \mathcal{Y}_b$, to
$\mathcal{Z}_b-\Delta_{\pi,b,prim}\in CH^{n-1}(\mathcal{Y}_b\times
\mathcal{Y}_b)_\mathbb{Q}$, where $\mathcal{Z}_b$ is supported on
$\mathcal{W}_b\times\mathcal{W}_b$, with ${\rm codim}\,
\mathcal{W}_b\subset \mathcal{Y}_b\geq c$.

In our polynomial $P(h,\delta_g)$, only the terms of degree $0$ in
$h$ can be mapped by $\tau_{b*}\circ i_b^*$ to a nonzero element in
$CH^{n-1}(\mathcal{Y}_b\times \mathcal{Y}_b)$ and concerning the
powers of $\Delta_g$, only the terms of degree $n-1$ in $\delta_g$
can be mapped to a nonzero element in $CH^{n-1}(\mathcal{Y}_b\times
\mathcal{Y}_b)_\mathbb{Q}$ (and they are then mapped to the class of
$\Gamma_g$ in $CH^{n-1}(\mathcal{Y}_b\times
\mathcal{Y}_b)_\mathbb{Q}$). The monomials of degree $\leq n-1$
involving at least two of the $\delta_g$ will also be annihilated by
$\tau_{b*}\circ i_b^*$ since their images will be supported on
$\Gamma_g\cap \Gamma_{g'}$ which has dimension $<n-1$. Hence we can
assume that
$$R=R_0+\sum_g\lambda_g\delta_g^{n-1},$$
 where $R_0$ is the pull-back to $M'$ of a cycle on $X\times
X$, without changing  the image $\tau_{b*}\circ i_b^*(R)\in
CH^{n-1}(\mathcal{Y}_b\times \mathcal{Y}_b)_\mathbb{Q}$. We thus
have
\begin{eqnarray}
\label{eqfinfinfin}\tau_{b*}\circ i_b^*(R)=R_{0\mid
\mathcal{Y}_b\times
\mathcal{Y}_b}+(-1)^n\sum_g\lambda_g\Gamma_g^{n-1}\,\,{\rm
in}\,\,CH^{n-1}(\mathcal{Y}_b\times \mathcal{Y}_b)_\mathbb{Q}
\end{eqnarray}
We know that $\tau_{b*}\circ i_b^*(R)$ is cohomologous to $0$ in
$\mathcal{Y}_b\times \mathcal{Y}_b$. As we made the assumption that
the endomorphisms
$\Gamma_{g,b*}:H^{n-1}(\mathcal{Y}_b,\mathbb{Q})_{prim}\rightarrow
H^{n-1}(\mathcal{Y}_b,\mathbb{Q})_{prim}$ are linearly independent,
we conclude from (\ref{eqfinfinfin}) that  all $\lambda_g$ vanish,
so that $R=R_0$. As we have
$$\tau_{b*}\circ i_b^*(R)=\mathcal{Z}_b-\Delta_{\pi,b,prim}\,\,{\rm
in}\,\, CH^{n-1}(\mathcal{Y}_b\times Y_b)_\mathbb{Q},$$ we conclude
that
$$\mathcal{Z}_b-\Delta_{\pi,b,prim}-R_{0\mid Y_b\times Y_b}=0\,\,{\rm\in }\,\,CH^{n-1}(\mathcal{Y}_b\times \mathcal{Y}_b)_\mathbb{Q},$$
where we recall that $\mathcal{Z}_b$ is supported on
$\mathcal{W}_b\times \mathcal{W}_b$ with ${\rm
codim}\,\mathcal{W}_b\geq c$. The argument explained in the
introduction then allows to conclude that
$CH_i(\mathcal{Y}_b)^\pi_{hom,\mathbb{Q}}=0$ for $i<c$.

\end{proof}
We refer to \cite{voisingenhodgebloch} for further potential
applications of the general strategy developed in Theorems
\ref{theomain}, \ref{theovariant}. Let us just mention one
challenging example. In \cite{voisinpisa}, the case of quintic
hypersurfaces in $\mathbb{P}^4$ invariant under the involution
acting by $(-1,-1,+1,+1,+1)$ on homogeneous coordinates is studied.
The involution acts as the identity
 on $H^{3,0}(X)$ and it is proved that the antiinvariant part of $H^3(X,\mathbb{Q})$
is parameterized by $1$-cycles. Theorem \ref{theovariant} above
(applied to the blow-up of $\mathbb{P}^4$ along the line
$\{X_2=X_3=X_4=0\}$ to avoid base-points) then implies that
$CH_0(X)^-$ is equal to $0$, a result which was already obtained in
\cite{voisinpisa}. The next case to study would be that of a sextic
hypersurface in $\mathbb{P}^5$ defined by an equation invariant
under the involution $i$ acting on homogeneous coordinates by
\begin{eqnarray}\label{typeinv}i^*(X_0,\ldots,X_5)=(-X_0,-X_1,-X_2,X_3,X_4,X_5).
\end{eqnarray}
This involution acts by $-Id$ on $H^{4,0}(X)$ and thus the
cohomology $H^4(X,\mathbb{Q})^+$ invariant under the involution has
Hodge coniveau $1$, so is expected to be parameterized by
$1$-cycles. Assuming this is true, then Theorem \ref{theovariant}
would imply that the invariant part $CH_0(X)^+_0$ of the group of
$0$-cycles of degree $0$ on $X$ is $0$. Indeed, Remark
\ref{rema30jan} applies to the very general invariant hypersurface
in this case, by standard infinitesimal variations of Hodge
structure arguments. This shows that if for the general invariant
hypersurface $X$ as above, $H^4(X,\mathbb{Q})^+$ is of geometric
coniveau $1$, then it is parameterized by $1$-cycles in the sense of
Definition \ref{defiparam}.

This example is particularly interesting because it relates to the
following question asked and studied in \cite[Section 3]{voisinsym}:
For any variety $Y$, we have the map
$$\mu_Y: CH_0(Y)_{hom}\otimes CH_0(Y)_{hom}\rightarrow CH_0(Y\times Y)$$
$$z\otimes z'\mapsto p_1^*z\cdot p_2^* z',$$
and the map
$$\mu^-_Y:CH_0(Y)_{hom}\otimes CH_0(Y)_{hom}\rightarrow CH_0(Y\times Y),$$
$$z\otimes z'\mapsto p_1^*z\cdot p_2^* z'-p_1^*z'\cdot p_2^*z.$$
Let now $S$ be a smooth
projective $K3$ surface.
\begin{question}\label{question} Is it true that
the map
$\mu_S^-$
is $0$?
\end{question}
This is implied by the generalized Bloch conjecture since the space
$H^{4,0}(S\times S)^-$ of  holomorphic $4$-forms  on $S\times S$
antiinvariant under the involution exchanging the factors is $0$.
(Note that there are nonzero antiinvariant holomorphic $2$-forms on
$S\times S$, but they are of the form $p_1^*\omega-p_2^*\omega$,
where $\omega\in H^{2,0}(S)$, while the $0$-cycles  in the image of
$\mu_S^-$ are annihilated by $p_{1*}$ and $p_{2*}$.)

The precise relation between Question \ref{question} and
$i$-invariant
 $CH_0$ groups of sextic hypersurfaces invariant under an involution
 $i$  of the type (\ref{typeinv}) is the following : the Shioda construction (see \cite{shioda})
shows that if $C\subset \mathbb{P}^2$ is a plane curve of degree $6$
defined by a polynomial equation $f(X_0,X_1,X_2)$, the sextic
fourfold $X$ defined by the equation $f(X_0,X_1,X_2)-f(Y_0,Y_1,Y_2)$
is rationally dominated by the product $\Sigma\times \Sigma$, where
$\Sigma$ is the sextic surface in $\mathbb{P}^3$ with equation
$U^6=f(X_0,X_1,X_2)$. The rational map
$\Phi:\Sigma\times\Sigma\dashrightarrow X$ is explicitly given by
$$\Phi((x,u),(y,v))=(vx,uy).$$
It makes $X$ birationally equivalent to the quotient of
$\Sigma\times \Sigma$ by $G=\mathbb{Z}/6\mathbb{Z}$, where we choose
an isomorphism $g\mapsto \zeta$ between  $G$ and the group of $6$th
roots of unity and the actions of $g\in G$ on $\Sigma$ and
$\Sigma\times \Sigma$ are given by
$$g(x,u)=(x,\zeta u),$$
$$g((x,u),(y,v))=(g(x, u),g(y, v)).$$
Note now that the $K3$ surface $S$, which is defined as the double
cover of $\mathbb{P}^2$ ramified along $C$, is also the quotient of
$\Sigma$ by the action of $ \mathbb{Z}/3\mathbb{Z}\subset
\mathbb{Z}/6\mathbb{Z}$. Let $p:\Sigma\rightarrow S$ be the quotient
map. We now have
\begin{lemm} \label{profin} Via the map
$\Phi_*\circ (p,p)^*$, the group ${\rm Im}\,\mu_S:(CH_0(S)_0\otimes
CH_0(S)_0\rightarrow CH_0(S\times S)$ embeds into $CH_0(X)_0$, and
the image ${\rm Im}\,\mu_S^-$ embeds into the invariant part of
$CH_0(X)_0$ under the involution $i$ (which is of the type
(\ref{typeinv})) acting on coordinates by
$$i(X_0,X_1,X_2,Y_0,Y_1,Y_2)=\sqrt{-1}(Y_0,Y_1,Y_2,-X_0,-X_1,-X_2),$$
which leaves the equation of $X$ invariant.

\end{lemm}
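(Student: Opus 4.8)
The plan is to work directly with $0$-cycles and analyse the composite $\Phi_*\circ(p,p)^*$ on Chow groups of dimension zero (with $\mathbb{Q}$-coefficients, or integrally up to torsion). First one checks it is well defined on $CH_0$: the map $(p,p)=p\times p\colon\Sigma\times\Sigma\to S\times S$ is finite flat of degree $9$, so $(p,p)^*$ is ordinary flat pull-back and $(p,p)_*(p,p)^*=9$; the rational map $\Phi$ has indeterminacy locus contained in $(\Sigma\cap\{U=0\})\times(\Sigma\cap\{U=0\})\cong C\times C$, of codimension $2$, and since $CH_0$ is a birational invariant, $\Phi_*$ is unambiguously defined on $CH_0$ by passing to a resolution of $\Phi$. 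That $\Phi_*(p,p)^*$ sends $\mathrm{Im}\,\mu_S$ into $CH_0(X)_0$ is then a mere degree count, since $\Phi_*$ preserves degrees, $(p,p)^*$ multiplies them by $9$, and $\deg\mu_S(z\otimes z')=(\deg z)(\deg z')=0$. So the substance is: (a) the image of $\mathrm{Im}\,\mu_S^-$ is $i$-invariant, and (b) $\Phi_*(p,p)^*$ is injective on $\mathrm{Im}\,\mu_S$.

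For (a) the key point is the equivariance
$$i\circ\Phi=\Phi\circ\tau\qquad\text{as rational maps }\Sigma\times\Sigma\dashrightarrow X,$$
where $\tau\colon\Sigma\times\Sigma\to\Sigma\times\Sigma$ is the automorphism $((x,u),(y,v))\mapsto((y,v),(x,-u))$: a direct substitution in the formulas $\Phi((x,u),(y,v))=(vx,uy)$ and $i(X_0,\ldots,X_5)=\sqrt{-1}(Y_0,Y_1,Y_2,-X_0,-X_1,-X_2)$ shows both sides send $((x,u),(y,v))$ to the class of $(-uy,vx)$ in $\mathbb{P}^5$. Now $\tau=(\mathrm{id}\times g^3)\circ\sigma$, where $\sigma$ exchanges the two factors and $g^3\in G$ is the order-two element acting by $(x,u)\mapsto(x,-u)$; and $g^3$ descends through $p\colon\Sigma\to S=\Sigma/\langle g^2\rangle$ to the covering involution $\iota$ of the double cover $S\to\mathbb{P}^2=\Sigma/\langle g\rangle$, so that $(g^3)_*\circ p^*=p^*\circ\iota_*$ on $CH_0$. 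The crucial elementary input is that $\iota_*=-\mathrm{id}$ on $CH_0(S)_0$: indeed $[a]+[\iota(a)]$ equals the pull-back to $S$ of the point class of $\mathbb{P}^2$, hence is a fixed class $\xi\in CH_0(S)$ (because $CH_0(\mathbb{P}^2)=\mathbb{Z}$), so $\iota_*z=(\deg z)\xi-z=-z$ for $z\in CH_0(S)_0$. Combining these facts one computes, for $z,z'\in CH_0(S)_0$,
$$\tau_*(p,p)^*(p_1^*z\cdot p_2^*z')=(p,p)^*(p_1^*z'\cdot p_2^*(\iota_*z))=-(p,p)^*(p_1^*z'\cdot p_2^*z),$$
whence $\tau_*(p,p)^*\alpha=(p,p)^*\alpha$ for every $\alpha\in\mathrm{Im}\,\mu_S^-$. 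With the equivariance this gives $i_*(\Phi_*(p,p)^*\alpha)=\Phi_*\tau_*(p,p)^*\alpha=\Phi_*(p,p)^*\alpha$, i.e. $\Phi_*(p,p)^*(\mathrm{Im}\,\mu_S^-)\subset CH_0(X)_0^i$.

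For (b) we produce a left inverse up to a nonzero scalar. Working with $\mathbb{Q}$-coefficients, choose a $G_{\mathrm{diag}}$-equivariant resolution $q\colon\widetilde{\Sigma\times\Sigma}\to\Sigma\times\Sigma$ of $\Phi$; then $\widetilde{\Sigma\times\Sigma}/G_{\mathrm{diag}}$ is birational to $X$, and the corresponding finite quotient map $\pi$ satisfies $\pi^*\pi_*=\sum_{g\in G_{\mathrm{diag}}}g_*$. Transporting through $q$ (using birational invariance of $CH_0$) we obtain $\Phi^{!}\colon CH_0(X)_{\mathbb{Q}}\to CH_0(\Sigma\times\Sigma)_{\mathbb{Q}}$ with $\Phi^{!}\circ\Phi_*=\sum_{k=0}^{5}(g^k,g^k)_*$. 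Since $(p,p)\circ(g^k,g^k)=(\delta_k,\delta_k)\circ(p,p)$ with $\delta_k=\mathrm{id}$ for $k$ even and $\delta_k=\iota$ for $k$ odd, and $(p,p)_*(p,p)^*=9$, we get
$$(p,p)_*\circ\Phi^{!}\circ\Phi_*\circ(p,p)^*=27(\mathrm{id}+(\iota,\iota)_*)\qquad\text{on }CH_0(S\times S)_{\mathbb{Q}}.$$
On $\mathrm{Im}\,\mu_S$ one has $(\iota,\iota)_*=\mathrm{id}$ (again because $\iota_*=-\mathrm{id}$ on $CH_0(S)_0$, so $(\iota\times\iota)_*(p_1^*z\cdot p_2^*z')=p_1^*(-z)\cdot p_2^*(-z')=p_1^*z\cdot p_2^*z'$), so this composite equals $54\,\mathrm{id}$ on $\mathrm{Im}\,\mu_S$. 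Hence $\Phi_*(p,p)^*$ is injective on $\mathrm{Im}\,\mu_S$; and since $\mu_S^-(z\otimes z')=\mu_S(z\otimes z')-\mu_S(z'\otimes z)$ shows $\mathrm{Im}\,\mu_S^-\subset\mathrm{Im}\,\mu_S$, it is injective on $\mathrm{Im}\,\mu_S^-$ as well, with image in $CH_0(X)_0^i$ by (a).

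The main obstacle is the bookkeeping of the group actions: keeping straight that $\langle g^2\rangle$ gives $S$ and that $g\equiv g^3$ modulo $\langle g^2\rangle$ induces the residual involution $\iota$ on $S$, that $G_{\mathrm{diag}}$ and $\langle g^2\rangle\times\langle g^2\rangle$ are the relevant subgroups of $G\times G$ acting on $\Sigma\times\Sigma$, and deriving the compatibilities $(g^3)_*p^*=p^*\iota_*$ and $(p,p)(g^k,g^k)=(\delta_k,\delta_k)(p,p)$. Once these are in place the argument is short, its only inputs being the explicit equivariance $i\circ\Phi=\Phi\circ\tau$ and the observation that the covering involution of $S\to\mathbb{P}^2$ acts by $-\mathrm{id}$ on $CH_0(S)_0$. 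A minor technical point to record is that $\Phi_*$ and its transpose $\Phi^{!}$ are well defined on $CH_0$ despite $\Phi$ being only rational, which follows from the birational invariance of $CH_0$ together with the existence of a $G_{\mathrm{diag}}$-equivariant resolution.
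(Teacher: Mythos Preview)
Your proof is correct and follows essentially the same approach as the paper's. Both arguments establish injectivity by exploiting that $\Phi$ is (birationally) the quotient by the diagonal $G=\mathbb{Z}/6\mathbb{Z}$, so that $\Phi^{!}\Phi_*=\sum_{g\in G}g_*$, together with the key observation that the covering involution of $S\to\mathbb{P}^2$ acts as $-\mathrm{id}$ on $CH_0(S)_0$, forcing $(p,p)^*(\mathrm{Im}\,\mu_S)$ to be $G$-invariant; and both establish $i$-invariance by identifying $i$ on $X$ with the swap of factors composed with the involution on one factor of $\Sigma\times\Sigma$. Your version is simply more explicit: you write down the equivariance $i\circ\Phi=\Phi\circ\tau$ and the exact scalar $54$, and you justify carefully why $\Phi_*$ and $\Phi^{!}$ make sense on $CH_0$ despite $\Phi$ being only rational, points the paper leaves implicit.
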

\begin{proof} The Shioda rational map $\Phi$ is the quotient map by the
group $G=\mathbb{Z}/6\mathbb{Z}$. So
for a $0$-cycle $z\in CH_0(\Sigma\times \Sigma)$, we have
$$\Phi^*(\Phi_*(z))=\sum_{g\in G}g^*z.$$
Let now $z=\sum_ipr_1^*z_i\cdot pr_2^*z'_i,\,{\rm deg}\,z_i={\rm deg}\,z'_i=0,$ be an element of
 ${\rm Im}\,\mu$. Then
denoting by $j$ the involution of $S$ over $\mathbb{P}^2$, we have
$j^*z_i=-z_i$, $j^*z'_i=-z'_i$, so that $(j,j)^*(z)=z$. It
immediately follows that $(p,p)^*z$ is invariant under $G$, so that
$\sum_{g\in G}g^*((p,p)^*z)=6(p,p)^*z$, which proves the injectivity
since $(p,p)^*$ is injective.

Let us now check that the cycles in $\Phi_*({\rm Im}\,\mu^-)$ are invariant under
$i$. Indeed, elements of $(p,p)^*({\rm Im}\,\mu^-)$ are antiinvariant under the involution
$\tau$ acting on $\Sigma\times \Sigma$ exchanging factors.
On the other hand, elements of ${\rm Im}\,\mu^-$
 are also antiinvariant under the involution $(Id,j)$ acting
 on $S\times S$. It follows that for
 $z\in {\rm Im}\,\mu^-$, one has $\tau^*((p,p)^*((Id,j)^*(z)))=z$.
 Applying $\Phi_*$, we get that $\Phi_*((p,p)^*z)$ is invariant under $i$.
\end{proof}

In conclusion, if we were able to prove that for the sextic
fourfolds $X$ invariant under the involution $i$ of the type
(\ref{typeinv}), the $i$-invariant part of $H^4(X,\mathbb{Q})$ is
parameterized by $1$-cycles, then by Theorem \ref{theovariant}, we
would get that $CH_0(X)^+_0=0$ and by Lemma \ref{profin}, we would
conclude that the map $\mu_S^-$ is $0$, thus solving  Question
\ref{question} for $K3$ surfaces which are ramified double covers
$\mathbb{P}^2$.

\vspace{0.5cm}

 {\bf Thanks.} {\it  I thank Lie Fu for his careful
reading}.

 Centre de math\'{e}matiques Laurent Schwartz

91128 Palaiseau C\'{e}dex

 France

\smallskip
 voisin@math.polytechnique.fr
    \end{document}